\documentclass{amsart}
\usepackage{amssymb,graphicx}
\usepackage{mathrsfs}
\usepackage{color}
\usepackage{enumerate}
\usepackage{a4wide}
\usepackage[colorlinks=true,citecolor=blue,linkcolor=blue]{hyperref}

\newcommand{\Rl}{\mathbb{R}}
\newcommand{\Cplx}{\mathbb{C}}

\newcommand{\Ntrl}{\mathbb{N}}


\newcommand{\Dc}{\mathcal{D}}
\newcommand{\Ec}{\mathcal{E}}
\newcommand{\Fc}{\mathcal{F}}

\newcommand{\Hc}{\mathcal{H}}

\newcommand{\Op}{\mathrm{Op}}


\newcommand{\loc}{\mathrm{loc}}


\newcommand{\Tb}{\mathbb{T}}

\newcommand{\Pb}{\mathbb{P}}

\newcommand{\Ubb}{\mathbb{U}}

\newcommand{\Kb}{\boldsymbol{K}}

\def\XXint#1#2#3{{\setbox0=\hbox{$#1{#2#3}{\int}$ }
\vcenter{\hbox{$#2#3$ }}\kern-.6\wd0}}

\numberwithin{equation}{section}

\newtheorem{theorem}{Theorem}[section]
\newtheorem{proposition}[theorem]{Proposition}
\newtheorem{corollary}[theorem]{Corollary}
\newtheorem{definition}[theorem]{Definition}
\newtheorem{lemma}[theorem]{Lemma}

\newtheorem{assumption}{Assumption}

\theoremstyle{remark}
\newtheorem{example}[theorem]{Example}
\newtheorem{remark}[theorem]{Remark}

\title{$L_p$ estimates in the Androulidakis-Mohsen-Yuncken calculus}
\date{\today}

\author{E. McDonald}
\address{Laboratoire d’analyse et de math\'ematiques appliqu\'ees, Universit\'e Paris-Est Cr\'eteil, France}
\email{eamcd92@gmail.com}

\begin{document}
\maketitle{}

\begin{abstract}
 We prove that order zero operators in the pseudodifferential calculus associated to a filtration defined by Androulidakis, Mohsen and Yuncken are bounded on $L_p$ spaces for $1<p<\infty.$
\end{abstract}

\section{Introduction}
Recently, Androulidakis, Mohsen and Yuncken introduced a pseudodifferential calculus adapted to an arbitrary filtration of a manifold \cite{AMY2022arxiv}. Using this calculus, they resolved the long-standing Helffer-Nourrigat conjecture characterising maximally hypoelliptic operators in terms of representation theory.

We describe their setting as follows. Let $U\subset \Rl^d$ be open, and let $X_1,\ldots,X_n$ be smooth vector fields on $U$ satisfying the H\"ormander bracket condition on $U.$ That is, the Lie algebra of vector fields on $U$ generated by $\{X_1,\ldots,X_n\}$ spans the tangent space of $U$ at every point.
Equip each vector $X_j$ with a weight $w_j\geq 1.$ This data induces a \emph{filtered foliation} on $U,$ which is a filtration of the algebra of vector fields $\mathscr{X}(U)$ by locally finitely generated $C^\infty(U)$-submodules
\[
    0 = \Fc_0 < \Fc_1 \leq \cdots \leq \Fc_k = \mathscr{X}(U).
\]
Here, $\Fc_m$ is the $C^\infty(U)$-linear span of all the commutators of the form
\[
    [X_{j_1},[X_{j_2},\cdots [X_{j_{\ell-1}},X_{j_{\ell}}]\cdots]],\quad w_{j_1}+\cdots+w_{j_{\ell}}\leq m.
\]
Let $\Fc = \{\Fc_j\}_{j=0}^k.$ The $\Fc$-order of a differential operator $P$ is the least integer $m$ such that $P$ can be written in the form
\[
    P = \sum_{j_1,\ldots,j_k} M_{a_{j_1,\ldots,j_k}}X_{j_1}\cdots X_{j_k}
\]
where $M_{a_{j_1,\ldots,j_k}}$ is the operator of pointwise multiplication by a smooth function $a_{j_1,\ldots,j_k}$ on $U$ and 
\[
    \sum_{j_1,\ldots,j_k} w_{j_1}+\cdots w_{j_k} \leq m.
\]
Let $\mathrm{DO}_{\Fc}^m(U)$ denote the space of differential operators of $\Fc$-order at most $m.$

The algebra of differential operators on $U,$ $\mathrm{DO}(U) = \bigcup_{m\geq 0} \mathrm{DO}_{\Fc}^m(U),$ is a filtered algebra in the sense that
\[
    \mathrm{DO}^{m_1}_{\Fc}(U)\cdot \mathrm{DO}^{m_2}_{\Fc}(U) \subseteq \mathrm{DO}^{m_1+m_2}_{\Fc}(U),\quad m_1,m_2\geq 1.
\]

In \cite{AMY2022arxiv}, the authors constructed a filtered algebra $\Psi_{\Fc}(U) = \bigcup_{m\in \Rl} \Psi^m_{\Fc}(U)$ extending $\mathrm{DO}(U)$ which has most of the desirable properties of a pseudodifferential calculus. The construction was based on an ``approximate homogeneity" condition first described by Debord and Skandalis \cite{DebordSkandalis2014}, and later used as the basis of pseudodifferential calculus by van Erp and Yuncken \cite{vanErpYuncken2019}.

One of the desirable properties of a pseudodifferential calculus is that operators of order zero should act as bounded operators on $L_2(U),$ the space of square integrable functions, at least locally. This property for $\Psi^0_{\Fc}(U)$ is proved in \cite[Theorem 3.31(b)]{AMY2022arxiv}. The proof there followed the classic strategy of first showing that an operator with positive principal symbol has an approximate square root. Since symbols in this calculus are not functions but instead elements of some $C^*$-algebra, showing that the desired approximate square root belongs to the calculus is not straightforward. It is desirable to have a more self-contained argument. This is what we supply here. An off-shoot of this effort is that we also show that operators in $\Psi^0_{\Fc}(U)$ are also bounded on $L_p$ spaces for $1<p<\infty,$ in a suitable local sense.
\subsection{Acknowledgements}
The key idea in the proof of the main theorem, and arguably the main novelty of this paper, is the decomposition of kernels according to the $\Rl$-action described in the proof of Theorem \ref{AMY_zero_order_bounded}. The observation that this decomposition is almost orthogonal is due to Omar Mohsen, who kindly shared it with us. Thanks are also due to Rapha\"el Ponge for helpful comments and suggestions.

\section{Main results}

Let $X$ be a $C^\infty$-manifold, and let $\Fc$ be a filtration of the Lie algebra of vector fields on $X$ by $C^\infty$-submodules. Let $\Psi^m_{\Fc}(X)$ be the pseudodifferential operators of order $m\in \Cplx$ on $X,$ in the sense of Androulidakis, Mohsen and Yuncken. We will recall the definition of this calculus is recalled in Section \ref{AMY_section} below.

A pseudodifferential operator $T\in \Psi^m_{\Fc}(X)$ is associated to a one-parameter family $\{T_{\hbar}\}_{\hbar>0},$ with $T_1=T.$ This extension is unique up to the addition of a smoothly varying family of smoothing operators.

Given $1<p<\infty,$ the $L_p$-space $L_p(X)$ can be defined invariantly as a space of $\frac1p$-densities on $X.$ Alternatively, we can choose a fixed nonzero density on $X$ and identify $L_{p}(X)$ with a space of functions. Since our results are local, this distinction is not important.

Given $\phi\in C^\infty_c(X),$ we write $M_{\phi}$ for the operator of pointwise multiplication by $\phi$ on $L_p(X).$ We say that an operator $T$ is locally bounded on $L_p(X)$ if $M_{\phi}TM_{\psi}$ is bounded from $L_p(X)$ to $L_p(X)$ for all $\phi,\psi\in C^\infty_c(X).$
\begin{theorem}\label{main_theorem}
    Let $1<p<\infty,$ and let $T \in \Psi^m_{\Fc}(X).$ If $\Re(m)\leq 0,$ then $T$ is locally bounded on $L_p(X).$
    Moreover, the associated family $\{T_{\hbar}\}_{\hbar>0}$ is uniformly continuous as $\hbar\to 0$ in the sense that if $\phi,\psi\in C^\infty_c(X),$ then
    \[
        \limsup_{\hbar\to 0} \|M_{\phi}T_{\hbar}M_{\psi}\|_{L_p(X)\to L_p(X)} < \infty.
    \]
    If $\Re(m)<0,$ then the family $\{T_{\hbar}\}_{\hbar>0}$ is uniformly continuous as $\hbar\to 0$ on $L_p(X)$ for any $1\leq p\leq \infty,$ and moreover $T$ is locally compact in the sense that if $\phi,\psi\in C^\infty_c(X),$ then $M_{\phi}TM_{\psi}$ is compact on $L_p(X)$ for $1\leq p\leq \infty.$
\end{theorem}

\begin{remark}
    Concerning the proof, we make a few remarks.
\begin{itemize}
    \item{} With $p=2,$ there is no loss of generality in considering $m=0,$ since the closure of the calculus under composition and adjoint implies that if $\Re(m)=0,$ then $T^*T \in \Psi^0_{\Fc}(X).$ For $p\neq 2,$ the proof needs to allow general $m.$
    \item{} The proof is based on an almost-orthogonal decomposition, and the operator $T_{\hbar}$ is decomposed into a sum $\sum_{j=1}^\infty T_{\hbar}^{(j)}.$ The proof of Theorem \ref{main_theorem} shows that this sum is unconditional in $L_p.$
    \item{} In fact $M_{\phi}T_{\hbar}M_{\psi}$ in the theorem obeys a weak-type estimate, uniformly as $\hbar\to 0.$ The Marcinkiewicz interpolation theorem implies that the norm of $M_{\phi}T_{\hbar}M_{\psi}$ from $L_p$ to $L_p$ is bounded by a constant multiple of $\frac{p^2}{p-1}.$
\end{itemize}
\end{remark}
It should be noted that Theorem \ref{main_theorem} is not new. A very general class of operators was proved by Street to be bounded on $L_p$-spaces in \cite{Street-singular-integrals-2014}.
Those operators are defined as having symbol functions that obey standard estimates \cite[Definition 4.1.1]{Street-singular-integrals-2014}, and the operators in the Androulidakis-Mohsen-Yuncken calculus have kernels that obey similar estimates, this follows from \cite[Proposition 3.4]{AMY2022arxiv} by an argument similar to \cite[Corollary 45]{vanErpYuncken2019}. Therefore, $\Psi^0_{\Fc}(X)$ are order zero operators in Street's sense and  Theorem \ref{main_theorem} follows from \cite[Theorem 5.10.1]{Street-maximal-subellipticity-2023}. In fact, Street also proves the boundedness on $L_p$ spaces of so-called multiparameter pseudodifferential operators. Since these are typically not of weak-type $(1,1)$ (for example, consider a tensor product of two pseudodifferential operators), the proof of Theorem \ref{main_theorem} given here does not extend to that case.

Nevertheless, the main reason for this paper is to give a proof of Theorem \ref{main_theorem} within the context of the Androulidakis-Mohsen-Yuncken calculus. 

An important special case occurs when the filtration $\Fc$ arises as the sections of a filtration of $TX$ by vector bundles. This was the case considered in \cite{vanErpYuncken2019}. The boundedness of order zero operators on $L_p$ spaces in this case follows from Theorem \ref{main_theorem}, but in this case similar statements have long been known, see \cite[Page 103]{Goodman-lnm-1976}.

The proof of Theorem \ref{main_theorem} is mostly independent of the results in \cite{AMY2022arxiv}. The only results used from there are that the calculus is closed under composition and the adjoint. The $L_2$-boundedness of order zero operators was proved in \cite{AMY2022arxiv}, but here we obtain the same result by other methods.

Associated to the calculus $\Psi_{\Fc}(X)$ is a scale of Hilbert-Sobolev spaces adapted to the filtration $\Fc$ \cite[Section 3.5]{AMY2022arxiv}. Having the boundedness of operators of order zero $L_p$-spaces, we can develop a similar theory for $L_p$-Sobolev spaces. We supply some aspects of this theory in Section \ref{sobolev_section}, including a Sobolev embedding theorem and in Section \ref{elliptic_regularity_section} deduce some results concerning maximal subellipticity in $L_p$ for H\"ormander operators.

\section{Abstract kernel estimates}\label{abstract_section}
In this section and the next we will work in the setting of abstract harmonic analysis, similar to \cite{CoifmanWeiss1971,DengHan2009}.
The style of argument will be familiar to experts in harmonic analysis and similar arguments have appeared elsewhere, see in particular Goodman \cite{Goodman-lnm-1976}.

Let $(X,\rho,\mu)$ be a quasi-metric doubling measure space. That is,
there exists a constant $C_{\rho}$ such that
\[
    \rho(x,y)\leq C_{\rho}(\rho(x,y_0)+\rho(y,y_0)),\quad x,y,y_0\in X.
\]
and there exists a constant $C_{\mu}$ such that
\[
    \mu(B_X(x,2r)) \leq C_{\mu}\mu(B_X(x,r)),\quad r>0,\, x \in X
\]
where $B_X(x,r)$ denotes the ball about $x\in X$ of radius $r$ with respect to $\rho.$

Ultimately, $X$ will be a subset of Euclidean space, $\mu$ will be the Lebesgue measure and $\rho$ will be a quasi-metric defined by Nagel, Stein and Wainger \cite{NagelSteinWainger1985},
but for this section and the next we work in an abstract setting. 

We want to avoid making the assumption that there exist constants $C_X,Q$ such that
\[
    \int_{A<\rho(x,y)<B} \rho(x,y)^{-Q}\,d\mu(y) \leq C_X\log(B/A),\quad 0<A<B<\infty.
\]
This assumption was made in \cite{Goodman-lnm-1976}, but it is unclear whether it holds in our situation.

We will consider singular integral operators with kernels $K,$ given formally as infinite linear combinations $\sum_{j\in \Ntrl}\alpha_jK_j.$ We will require in different places various conditions on the individual terms $K_j,$ listed as Conditions \eqref{kernelsupport}-\eqref{kernelcancellation}, defined as follows.
\begin{definition}\label{kernelconditions}
    Let $\Kb = \{K_j\}_{j\in \Ntrl}$ be a sequence of locally integrable functions on $X\times X.$ We consider the following conditions on the sequence $\Kb$:
    \begin{enumerate}[{\rm (I)}]
        \item{}\label{kernelsupport} There is $C_1(\Kb)>0$ such that for all $j\in \mathbb{Z},$ $K_j(x,y)$ is supported in the set of $(x,y)\in X\times X$ where $\rho(x,y)\leq C_1(\Kb)2^{-j}.$
        \item{}\label{kernelmass} There is $C_{2}(\Kb)>0$ such that for all $j\in \Ntrl,$ we have
        \[
            \int_{X} |K_j(x,y)|\,d\mu(y)\,\leq C_{2}(\Kb).
        \]
        \item{}\label{kernelsmoothnessballvolume} There are constants $C_{3}(\Kb)$ and $M(\Kb)$ and a sequence of functions $\{I_j\}_{j\in \Ntrl}$ on $X\times X$ such that if $\rho(y,y_0)\leq M(\Kb)2^{-j}$ then
        \[
            |K_j(x,y)-K_{j}(x,y_0)| \leq 2^j\rho(y,y_0)I_j(x,y)
        \]
        and 
        \[
            \sup_{x\in X} \int_{X} |I_j(x,y)|\,d\mu(y),\, \sup_{y\in X} \int_X |I_j(x,y)|\,d\mu(x) \leq C_{3}(\Kb).
        \]
        \item{}\label{kernelcancellation} There is $C_4(\Kb)$ such that
        \[
            \left|\int_{X} K_j(x,y)\,d\mu(y)\right| \leq C_4(\Kb)2^{-j}.
        \]
    \end{enumerate}
    Given a finitely supported sequence $\{\alpha_j\}_{j\in \Ntrl}\subset \Cplx,$ denote
    \[
        \Kb_{\alpha}(x,y) = \sum_{j\in \Ntrl} \alpha_j K_j(x,y).
    \]
    Also let $\Kb^*$ be the sequence
    \[
        \Kb^* = \{(x,y)\mapsto \overline{K_j(y,x)}\}_{j\in \Ntrl}.
    \]
\end{definition}

Some initial remarks on the definition:
\begin{itemize}
    \item{} \eqref{kernelsupport} is indifferent to the order of $x$ and $y,$ so that if it applies to the functions $K_j(x,y)$ then it also applies to $\Kb^*$ with the same constant.
    \item{} All four conditions apply with the same constants if we replace $K_j(x,y)$ by $\overline{K_j(x,y)}$ or $\alpha_j K_j(x,y)$ where $|\alpha_j|\leq 1.$
    \item{} If $\Kb$ satisfies \eqref{kernelsupport}, then $K_j(x,y)=0$ for $2^j>C_1(\Kb)\rho(x,y)^{-1}$ and so the series defining $\Kb_{\alpha}(x,y)$ makes sense for arbitrary sequences $\alpha$ when $x\neq y.$
    \item{} The constant $M(\Kb)$ in Condition \eqref{kernelsmoothnessballvolume} is denoted differently to $C_1(\Kb),\ldots,C_4(\Kb)$ because the size of the constant has a different effect on the strength of the condition. Whereas a smaller value of $C_j(\Kb)$ for $j=1,\ldots,4$ implies a stronger condition, a smaller value of $M(\Kb)$ implies a weaker condition.
\end{itemize}

We start with the following relatively easy consequence of the first two conditions in Definition \ref{kernelconditions}
\begin{lemma}\label{uniform_parts}
    Let $\Kb = \{K_j\}_{j\in \Ntrl}$ satisfy conditions \eqref{kernelsupport} and \eqref{kernelmass} of Definition \ref{kernelconditions}. Then
    \begin{equation}\label{uniformsupport}
        \sup_{x\in X} \int_X \rho(x,y)|K_j(x,y)|\,d\mu(y) \leq C_1(\Kb)C_{2}(\Kb)2^{-j},\quad j\in \Ntrl.
    \end{equation}
\end{lemma}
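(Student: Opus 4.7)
The plan is to combine conditions \eqref{kernelsupport} and \eqref{kernelmass} directly, with condition \eqref{kernelsupport} used to gain the decay factor $2^{-j}$ and condition \eqref{kernelmass} used to absorb the remaining $L^1$-mass of $K_j(x,\cdot)$. No auxiliary constructions are needed; this is a routine bookkeeping computation that the authors presumably record because the estimate \eqref{uniformsupport} will be reused downstream when controlling $\rho(x,y)$-weighted integrals against the kernel.

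Concretely, I would fix $x\in X$ and $j\in \Ntrl$, and observe that by Condition \eqref{kernelsupport} the function $y\mapsto K_j(x,y)$ vanishes outside the ball $B_X(x,C_1(\Kb)2^{-j})$. Hence, at every point $y$ at which the integrand $\rho(x,y)|K_j(x,y)|$ is nonzero, we have the deterministic bound $\rho(x,y)\leq C_1(\Kb)2^{-j}$. Pulling this constant out of the integral yields
\[
    \int_X \rho(x,y)\,|K_j(x,y)|\,d\mu(y) \leq C_1(\Kb)2^{-j} \int_X |K_j(x,y)|\,d\mu(y),
\]
and applying Condition \eqref{kernelmass} to the right-hand side bounds it by $C_1(\Kb)C_2(\Kb)2^{-j}$. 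Taking the supremum in $x$ gives \eqref{uniformsupport}.

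There is no genuine obstacle. The only subtlety worth noting is that the argument uses nothing about the doubling hypothesis on $\mu$ or the quasi-metric constant $C_\rho$; it is purely a consequence of the pointwise support condition and the uniform $L^1$-mass bound. In particular, the same reasoning applied to $\Kb^*$ (whose individual kernels share the same support bound by the first remark following Definition \ref{kernelconditions}) yields the analogous estimate with the roles of $x$ and $y$ interchanged, should it be needed later.
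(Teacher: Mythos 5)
Your proof is correct and is essentially identical to the paper's: both use Condition \eqref{kernelsupport} to bound $\rho(x,y)$ by $C_1(\Kb)2^{-j}$ on the support of $K_j(x,\cdot)$, pull the constant out, and then apply Condition \eqref{kernelmass}.
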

\begin{proof}
    For $x,y\in X,$ we have
    \[
        \rho(x,y)\chi_{B_X(x,C_1(\Kb)2^{-j})}(y)\leq C_1(\Kb)2^{-j}\chi_{B_X(x,C_1(\Kb)2^{-j})}(y).
    \]
    and hence
    \[
        \rho(x,y)|K_j(x,y)| \leq C_1(\Kb)2^{-j}|K_{j}(x,y)|.
    \]
    Integrating over $y$ and using \eqref{kernelmass} delivers \eqref{uniformsupport}.
\end{proof}

The following shows that the sums $\Kb_{\alpha}$ from Definition \ref{kernelconditions} obey the estimates required of a Calder\`on-Zygmund kernel as in \cite[Chapter III, Theorem (2.4), p.74]{CoifmanWeiss1971}.
\begin{lemma}\label{coifman_weiss_condition}
    If $\Kb = \{K_j\}_{j\in \Ntrl}$ satisfies \eqref{kernelsupport} and \eqref{kernelsmoothnessballvolume} of Definition \ref{kernelconditions}, and $\Kb^*$ satisfies \eqref{kernelmass}, then for all bounded sequences $\alpha$ and $y\neq y_0\in X$ we have
    \[
        \int_{\rho(x,y)>2C_{\rho}\rho(y,y_0)} |\Kb_{\alpha}(x,y)-\Kb_{\alpha}(x,y_0)| \,d\mu(x) \leq C(\Kb)\sup_j \{|\alpha_j|\}
    \]
    where $C(\Kb)$ depends only on $C_1(\Kb)/M(\Kb)$, on $C_2(\Kb^*)$ and on $C_1(\Kb)C_3(\Kb).$
\end{lemma}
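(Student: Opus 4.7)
The plan is to carry out a Littlewood--Paley-type decomposition of the series $\sum_j \alpha_j(K_j(x,y) - K_j(x,y_0))$ at a scale determined by $r := \rho(y,y_0)$. Let $J$ be the largest nonnegative integer with $M(\Kb) 2^{-J} \geq r$, or $J=-1$ if no such integer exists. I split
\[
    \Kb_\alpha(x,y) - \Kb_\alpha(x,y_0) = S_1(x) + S_2(x),
\]
where $S_1$ collects the indices $0 \leq j \leq J$ (the \emph{smoothness regime}, where $2^{-j} \geq r/M(\Kb)$) and $S_2$ collects the indices $j > J$ (the \emph{support regime}). The integration region $\{\rho(x,y) > 2C_\rho r\}$ will play no role for $S_1$ but is essential for $S_2$.

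For $S_1$, condition \eqref{kernelsmoothnessballvolume} applies to give $|K_j(x,y) - K_j(x,y_0)| \leq 2^j r\, I_j(x,y)$. Integrating over $x$, using the bound $\int I_j(\cdot, y)\,d\mu \leq C_3(\Kb)$ on $I_j$, and summing the resulting geometric series using $2^J r \leq M(\Kb)$ gives
\[
    \int_X |S_1(x)|\,d\mu(x) \leq \sup_j |\alpha_j| \cdot C_3(\Kb) \sum_{j=0}^{J} 2^j r \leq 2\,M(\Kb) C_3(\Kb)\, \sup_j |\alpha_j|.
\]

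For $S_2$ on the region $\{\rho(x,y) > 2C_\rho r\}$, I use condition \eqref{kernelsupport} to truncate the infinite sum. If $C_1(\Kb)2^{-j} \leq r$, then the support of $K_j(\cdot,y)$ lies in $\{\rho(x,y) \leq r\}$ and, by the quasi-triangle inequality, the support of $K_j(\cdot, y_0)$ lies in $\{\rho(x,y) \leq C_\rho(r+r) = 2 C_\rho r\}$; using $C_\rho\geq 1$, both sets are disjoint from the integration region. Hence only indices $J < j \leq \log_2(C_1(\Kb)/r)$ contribute, a number bounded by a constant depending only on $C_1(\Kb)/M(\Kb)$. On each such $j$, applying condition \eqref{kernelmass} to $\Kb^*$ gives $\int_X |K_j(x,y)|\,d\mu(x),\ \int_X|K_j(x,y_0)|\,d\mu(x) \leq C_2(\Kb^*)$, which delivers the required bound on $\int_{\rho(x,y)>2C_\rho r} |S_2(x)|\,d\mu(x)$.

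Adding the two estimates yields the lemma with $C(\Kb)$ depending on $M(\Kb)C_3(\Kb)$, $C_2(\Kb^*)$, and $C_1(\Kb)/M(\Kb)$; the identity $M(\Kb)C_3(\Kb) = (C_1(\Kb)C_3(\Kb))/(C_1(\Kb)/M(\Kb))$ recasts this dependence in the form claimed. I do not foresee any serious obstacle; the only subtle point is that the count of surviving ``transition'' indices in $S_2$ is a function of $C_1(\Kb)/M(\Kb)$ alone, which is why the threshold $J$ must be tied to $M(\Kb)$ rather than to $C_1(\Kb)$.
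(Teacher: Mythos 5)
Your proposal is correct and follows essentially the same route as the paper: both arguments split the scales into a smoothness regime handled by Condition (III) with a geometric series, a logarithmic number of transition scales (controlled by $C_1(\Kb)/M(\Kb)$) handled by Condition (II) for $\Kb^*$, and fine scales that vanish on the region $\rho(x,y)>2C_\rho\rho(y,y_0)$ by Condition (I). The only cosmetic difference is the order in which the support truncation and the split at $2^{-j}\approx \rho(y,y_0)/M(\Kb)$ are performed, and your geometric-series bound $2M(\Kb)C_3(\Kb)$ is marginally sharper than the paper's $2C_1(\Kb)C_3(\Kb)$.
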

\begin{proof}
    Note that in region of integration, $\Kb_{\alpha}(x,y)$ is only evaluated when $x\neq y.$ Since $K_j(x,y)$ is non-zero only when $\rho(x,y)<C_1(\Kb)2^{-j},$ the sum
    \[
        \Kb_{\alpha}(x,y) = \sum_{j\in \Ntrl} \alpha_j K_j(x,y)
    \]
    is finite for every $x\neq y.$

    By definition,
    \begin{align*}
        &\int_{\rho(x,y)>2C_{\rho}\rho(y,y_0)} |\Kb_{\alpha}(x,y)-\Kb_{\alpha}(x,y_0)|\,d\mu(x)\\
        &\quad\leq \int_{\rho(x,y)>2C_{\rho}\rho(y,y_0)} \sum_{j\in \Ntrl} |\alpha_j||K_j(x,y)-K_j(x,y_0)|\,d\mu(x).
    \end{align*}
    We split the sum into parts where $\rho(y,y_0)< C_1(\Kb)2^{-j}$ and where $\rho(y,y_0)>C_1(\Kb)2^{-j}.$
    In the region of integration, we have $\rho(x,y)>2C_{\rho}\rho(y,y_0),$ and hence by the quasi-triangle inequality
    \[
        C_\rho(\rho(x,y_0)+\rho(y,y_0)) \geq \rho(x,y) > 2C_{\rho}\rho(y,y_0).
    \]
    Therefore $\rho(x,y_0)> \rho(y,y_0).$ If $\rho(y,y_0)>C_1(\Kb)2^{-j},$ then we have both $\rho(x,y_0)$ and $\rho(x,y)$ greater than $C_1(\Kb)2^{-j}.$ Hence,
    \[
        \sum_{\rho(y,y_0)>C_1(\Kb)2^{-j}} |\alpha_j||K_j(x,y)-K_j(x,y_0)| = 0.
    \]
    Thus
    \begin{align*}
        &\int_{\rho(x,y)>2C_{\rho}\rho(y,y_0)} |\Kb_{\alpha}(x,y)-\Kb_{\alpha}(x,y_0)|\,d\mu(x)\\
        &= \sum_{2^j\leq C_1(\Kb)\rho(y,y_0)^{-1}} \int_{\rho(x,y)>2C_\rho\rho(y,y_0)} |\alpha_j||K_j(x,y)-K_j(x,y_0)|\,d\mu(x)\\
        &\leq \sum_{2^{j}< M(\Kb)\rho(y,y_0)^{-1}} \int_{\rho(x,y)>2C_\rho\rho(y,y_0)} |\alpha_j||K_j(x,y)-K_j(x,y_0)|\,d\mu(x)\\
        &\quad + \sum_{ M(\Kb)\rho(y,y_0)^{-1} \leq 2^j \leq C_1(\Kb)\rho(y,y_0)^{-1}} \int_{\rho(x,y)>2C_\rho\rho(y,y_0)} |\alpha_j||K_j(x,y)-K_j(x,y_0)|\,d\mu(x).
    \end{align*}
    Applying Condition \eqref{kernelsmoothnessballvolume} to the first summand and Condition \eqref{kernelmass} to the second summand gives
    \begin{align*}
        &\int_{\rho(x,y)>2C_{\rho}\rho(y,y_0)} |\Kb_{\alpha}(x,y)-\Kb_{\alpha}(x,y_0)|\,d\mu(x)\\
        &\leq \sup_j\{|\alpha_j|\}\sum_{2^j < M(\Kb)\rho(y,y_0)^{-1}} 2^j\rho(y,y_0)\int_{X} I_j(x,y)\,d\mu(x)\\
        &\quad + \sup_{j}\{|\alpha_j|\} \sum_{M(\Kb)\rho(y,y_0)^{-1}\leq 2^j\leq C_1(\Kb)\rho(y,y_0)^{-1}}2C_2(\Kb^*)\\
        &\leq \sup_j\{|\alpha_j|\}C_3(\Kb) \sum_{2^j\leq C_1(\Kb)\rho(y,y_0)^{-1}} 2^j \rho(y,y_0)\\
        &\quad+ \sup_{j}\{|\alpha_j|\} \sum_{M(\Kb)\rho(y,y_0)^{-1}\leq 2^j\leq C_1(\Kb)\rho(y,y_0)^{-1}}2C_2(\Kb^*)
    \end{align*}
    By the numerical inequalities
    \[
        \sum_{2^j\leq x} 2^j \leq 2x,\quad x\geq 0
    \]
    and 
    \[
        |\{j\geq 0\;:\;A\leq 2^j \leq B\}| \leq 2+|\log(B/A)/\log(2)|
    \]
    we conclude
    \[
        \int_{\rho(x,y)>2C_{\rho}\rho(y,y_0)} |\Kb_{\alpha}(x,y)-\Kb_{\alpha}(x,y_0)|\,d\mu(x) \leq C(\Kb)\sup_j\{|\alpha_j|\}
    \]
    where $C(\Kb)$ depends only on $C_2(\Kb^*),$ on $C_1(\Kb)/M(\Kb)$ and on $C_1(\Kb)C_3(\Kb).$

\end{proof}

In order to obtain $L_2$ boundedness for $\Kb_{\alpha},$ we need to verify an almost-orthogonality condition on the kernels $\{K_j\}_{j\in \Ntrl}$ and also their adjoints.

\begin{theorem}\label{almostorthogonality}
    Let $\Kb = \{K_j\}_{j\in \Ntrl}$ and $\Kb^* = \{\overline{K_j(y,x)}\}_{j\in \Ntrl}$ both satisfy all of the conditions of Definition \ref{kernelconditions}. Assume without loss of generality that $C_j(\Kb^*)= C_j(\Kb)$ for $j=1,2,3,4$ and $M(\Kb^*)=M(\Kb).$ Then
    \[
        \sup_{x\in X} \int_X \left|\int_X K_j(x,y_0)\overline{K_{\ell}(y,y_0)}d\mu(y_0)\right|\,d\mu(y) \leq C(\Kb)2^{-|j-\ell|}
    \]
    where $C(\Kb)$ depends only on $C_2(\Kb),$ on $C_1(\Kb)C_3(\Kb),$ on $C_2(\Kb)C_4(\Kb)$ and on $C_1(\Kb)/M(\Kb).$

\end{theorem}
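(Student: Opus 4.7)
The plan is to establish the almost-orthogonality bound via a classical ``smoothness against cancellation'' argument, splitting into two symmetric cases according to whether $j\geq\ell$ or $\ell\geq j$. First, fix a threshold $N=\lceil\log_2(C_1(\Kb)/M(\Kb))\rceil+2$. When $|j-\ell|\leq N$, Fubini together with the mass bound \eqref{kernelmass} applied to both $\Kb$ and $\Kb^*$ yields an absolute upper bound of $C_2(\Kb)C_2(\Kb^*)$, which is comparable to $2^{-|j-\ell|}$ uniformly in this regime, so the interesting case is $|j-\ell|>N$.

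Suppose first that $j\geq \ell$, so $K_j$ has the narrower support. Inside the $y_0$-integral, I decompose
\[
\overline{K_\ell(y,y_0)}=\overline{K_\ell(y,x)}+\bigl[\overline{K_\ell(y,y_0)}-\overline{K_\ell(y,x)}\bigr].
\]
The first piece factors $\overline{K_\ell(y,x)}$ out of the $y_0$-integral; cancellation \eqref{kernelcancellation} for $K_j$ contributes a factor $C_4(\Kb)2^{-j}$, and integrating $|K_\ell(y,x)|$ in $y$ via the $\Kb^*$ mass bound \eqref{kernelmass} produces a total of at most $C_2(\Kb^*)C_4(\Kb)2^{-j}\leq C\cdot 2^{-(j-\ell)}$. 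For the second piece, the support condition on $K_j(x,\cdot)$ forces $\rho(x,y_0)\leq C_1(\Kb)2^{-j}\leq M(\Kb)2^{-\ell}$ (this is exactly where the choice of $N$ is used), so Condition \eqref{kernelsmoothnessballvolume} applies to $K_\ell$ and yields $|K_\ell(y,y_0)-K_\ell(y,x)|\leq C_1(\Kb)2^{\ell-j}I_\ell(y,y_0)$; integrating in $y$ first (using $\int I_\ell(y,y_0)\,d\mu(y)\leq C_3(\Kb)$) and then in $y_0$ (using $\int|K_j(x,y_0)|\,d\mu(y_0)\leq C_2(\Kb)$) delivers the desired $C\cdot 2^{-(j-\ell)}$.

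The case $\ell\geq j$ is handled by the symmetric decomposition $K_j(x,y_0)=K_j(x,y)+[K_j(x,y_0)-K_j(x,y)]$: cancellation \eqref{kernelcancellation} for $K_\ell$ (integrated in its second variable $y_0$) controls the constant part, while Condition \eqref{kernelsmoothnessballvolume} for $K_j$---applied on the support of $K_\ell(y,\cdot)$, where $\rho(y,y_0)\leq C_1(\Kb)2^{-\ell}\leq M(\Kb)2^{-j}$---controls the smooth-difference part, estimated now by integrating in $y$ via the $\Kb^*$ mass bound and then in $y_0$ via the $I_j$ mass bound from \eqref{kernelsmoothnessballvolume}. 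The main subtlety throughout is bookkeeping: in each sub-case one must identify which of the four conditions, and whether for $\Kb$ or $\Kb^*$, controls each stage of integration, and one must check that the threshold $N$ guarantees the relevant smoothness condition actually applies on the support of the partner kernel. The hypothesis that $\Kb^*$ satisfies the same conditions with identical constants to $\Kb$ makes this routine rather than delicate.
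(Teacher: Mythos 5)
Your proposal is correct and follows essentially the same route as the paper: the trivial mass bound handles $|j-\ell|$ small, and for $|j-\ell|$ large the same decomposition of the wider kernel into its value at the base point (controlled by the cancellation condition \eqref{kernelcancellation}) plus a difference (controlled by the smoothness condition \eqref{kernelsmoothnessballvolume} on the support of the narrower kernel) gives the geometric decay. The only cosmetic difference is that you bound $2^{\ell}\rho(x,y_0)$ pointwise by $C_1(\Kb)2^{\ell-j}$ using the support condition before integrating, whereas the paper integrates $\rho(x,y_0)|K_j(x,y_0)|$ via Lemma \ref{uniform_parts}; both yield the same constants.
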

\begin{proof}
    Let
    \[
        b_{j,\ell}(x,y) = \int_{X} K_j(x,y_0)\overline{K_{\ell}(y,y_0)}\,d\mu(y_0).
    \]
    Note that
    \begin{equation}\label{l_close_to_j}
        \int_{X} |b_{j,\ell}(x,y)| \,d\mu(y) \leq C_2(\Kb)^2.
    \end{equation}
    We will show separately that
    \begin{equation}\label{l_lt_j}
        \sup_{x\in X}\int_X|b_{j,\ell}(x,y)|\,d\mu(y) \leq  C_2(\Kb)(C_1(\Kb)C_3(\Kb)+C_4(\Kb))  2^{\ell-j},\quad 2^{\ell-j}\frac{C_1(\Kb)}{M(\Kb)}\leq 1
    \end{equation}
    and
    \begin{equation}\label{j_lt_l}
        \sup_{x\in X}\int_X|b_{j,\ell}(x,y)|\,d\mu(y) \leq C_2(\Kb)(C_1(\Kb)C_3(\Kb)+C_4(\Kb))  2^{j-\ell},\quad 2^{j-\ell}\frac{C_1(\Kb)}{M(\Kb)}\leq 1.
    \end{equation}
    The combination of \eqref{l_close_to_j}, \eqref{l_lt_j} and \eqref{j_lt_l} implies the result.

    First, to show \eqref{l_lt_j}, we write $b_{j,\ell}$ as a sum as follows:
    \[
        b_{j,\ell}(x,y) = \int_X K_j(x,y_0)(\overline{K_{\ell}(y,y_0)}-\overline{K_{\ell}(y,x)})\,d\mu(y_0) + \overline{K_{\ell}(y,x)}\int_X K_{j}(x,y_0)\,d\mu(y_0).
    \]
    Exchanging the order of integration in the first term gives
    \begin{align*}
        \int_{X} |b_{j,\ell}(x,y)|\,d\mu(y) &\leq \int_X |K_j(x,y_0)|\left(\int_X\,|K_{\ell}(y,y_0)-K_{\ell}(y,x)|d\mu(y)\right)\,d\mu(y_0)\\
                                            &\quad + \int_{X} |K_{\ell}(y,x)|\,d\mu(y)\left|\int_X K_j(x,y_0)\,d\mu(y_0)\right|.
    \end{align*}
    Applying \eqref{kernelmass} for $\Kb^*$ and \eqref{kernelcancellation} for $\Kb$ to the second summand gives
    \begin{align*}
        \int_X |b_{j,\ell}(x,y)|\,d\mu(y) &\leq \int_X |K_j(x,y_0)|\left(\int_X\,|K_{\ell}(y,y_0)-K_{\ell}(y,x)|d\mu(y)\right)\,d\mu(y_0)\\
                                          &\quad+ C_{2}(\Kb)C_4(\Kb)2^{-j}.
    \end{align*}
    Since $\ell\geq 0,$ it follows that
    \begin{align*}
        \int_X |b_{j,\ell}(x,y)|\,d\mu(y) &\leq \int_X |K_j(x,y_0)|\left(\int_X\,|K_{\ell}(y,y_0)-K_{\ell}(y,x)|d\mu(y)\right)\,d\mu(y_0)\\
                                          &\quad + C_{2}(\Kb)C_4(\Kb)2^{\ell-j}.
    \end{align*}
    We therefore concentrate on bounding the first summand above.
    Condition \eqref{kernelsmoothnessballvolume} implies that
    \begin{equation}\label{integrated_kernel_smoothness}
        \int_{X}|K_{\ell}(y,y_0)-K_{\ell}(y,x)|d\mu(y) \leq C_{3}(\Kb)2^{\ell}\rho(x,y_0),\quad \rho(y_0,x)\leq M(\Kb)2^{-\ell}.
    \end{equation}

    By \eqref{kernelsupport}, in the support of $K_j(x,y_0)$ we have $\rho(x,y_0)\leq C_1(\Kb)2^{-j}.$ For \eqref{l_lt_j}, we assume that
    \[
        C_1(\Kb)2^{-j}\leq M(\Kb)2^{-\ell}
    \]
    which implies, via \eqref{integrated_kernel_smoothness}, that
    \begin{align*}
        &\int_X |K_j(x,y_0)|\left(\int_X\,|K_{\ell}(y,y_0)-K_{\ell}(y,x)|d\mu(y)\right)\,d\mu(y_0)\\
                                            &\leq C_{3}(\Kb)2^{\ell} \int_X \rho(x,y_0)|K_j(x,y_0)|\,d\mu(y_0).
    \end{align*}
    Applying \eqref{uniformsupport} to $\Kb$ delivers \eqref{l_lt_j}.

    Now we prove \eqref{j_lt_l}. Write
    \[
        b_{j,\ell}(x,y) = \int_{X} (K_j(x,y_0)-K_j(x,y))\overline{K_{\ell}(y,y_0)}\,d\mu(y_0) + K_j(x,y)\int_{X} \overline{K_{\ell}(y,y_0)}\,d\mu(y_0).
    \]
    Exchanging the order of integration we obtain
    \begin{align*}
        \int_{X} |b_{j,\ell}(x,y)|\,d\mu(y) &\leq \int_X \int_X |K_{\ell}(y,y_0)||K_j(x,y_0)-K_j(x,y)|\,d\mu(y)\,d\mu(y_0)\\
                                            &\quad + \int_X |K_j(x,y)|\,d\mu(y)\left|\int_X K_{\ell}(y,y_0)\,d\mu(y_0)\right|.
    \end{align*}
    By \eqref{kernelmass} and \eqref{kernelcancellation} applied to $\Kb,$ the second term is bounded above by $C_{2}(\Kb)C_4(\Kb)2^{-\ell}.$ We therefore concentrate on the first summand.

        Since $C_1(\Kb)2^{-\ell}\leq M(\Kb)2^{-j},$ within the support of $K_{\ell}(y,y_0),$ we have $\rho(y,y_0)\leq C_1(\Kb)2^{-j}.$  Applying \eqref{kernelsmoothnessballvolume} to $K_j$ and \eqref{uniformsupport} to $K_\ell$ we obtain
        \begin{align*}
            &\int_X \int_X |K_{\ell}(y,y_0)||K_j(x,y_0)-K_{j}(x,y)|\,d\mu(y)d\mu(y_0)\\
            &\leq 2^{\ell}\int_X \int_X \rho(y,y_0)|K_{\ell}(y,y_0)|I_j(x,y)\,d\mu(y)d\mu(y_0)\\
            &\leq C_1(\Kb)C_2(\Kb)2^{\ell-j}\int_{X} I_j(x,y)\,d\mu(y)\\
            &\leq C_1(\Kb)C_2(\Kb)C_{3}(\Kb)2^{\ell-j}.
        \end{align*}
        This completes the proof of \eqref{l_lt_j}.
    
\end{proof}

\section{Abstract Operator estimates}\label{abstract_operator_section}
    In this section we continue in the same setting as Section \ref{abstract_section}: $(X,\rho)$ is a quasi-metric space, and $\mu$ is a doubling measure on $X$ with respect to $\rho.$

    Let $\Kb = \{K_j\}_{j\in \Ntrl}$ be a kernel satisfying all of the conditions of Definition \ref{kernelconditions}, and assume that the same holds for $\Kb^*.$ Without loss of generality, we assume throughout that $C_j(\Kb^*)=C_j(\Kb)$ for $j=1,\ldots,4$
    and $M(\Kb^*)=M(\Kb).$

    Given a locally integrable function $K$ on $X\times X,$ denote $\Op(K)$ for the integral operator with kernel $K.$ That is,
    \[
        \Op(K)u(x) = \int_{X} K(x,y)u(y)\,d\mu(y),\quad u\in L_{\infty,\mathrm{comp}}(X).\; x\in X.
    \]

    \begin{theorem}\label{abstract_L_2_boundedness}
        Let $\Kb$ and $\Kb^*$ both satisfy all of the conditions of Definition \ref{kernelconditions}.
        For any finitely supported sequence $\{\alpha_j\}_{j\in \Ntrl},$ the operator $\Op(\Kb_{\alpha})$ is bounded on $L_2(X,\mu).$ Specficially, we have
        \[
            \|\Op(\Kb_{\alpha})\|_{L_2(X,\mu)\to L_2(X,\mu)} \leq 2^{\frac12}C(\Kb)^{\frac12}\max_j\{|\alpha_j|\}.
        \]
        where $C(\Kb)$ is the same constant as in Theorem \ref{almostorthogonality}. 
    \end{theorem}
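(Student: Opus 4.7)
My plan is to invoke the Cotlar-Stein almost-orthogonality lemma for the finite decomposition $\Op(\Kb_\alpha) = \sum_{j \in \Ntrl} \alpha_j T_j$ with $T_j := \Op(K_j)$. The key inputs are operator-norm almost-orthogonality estimates on $T_j T_\ell^*$ and $T_j^* T_\ell$, both of which will follow by Schur's test from Theorem \ref{almostorthogonality}.

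First I would convert the $L_1$-kernel estimate of Theorem \ref{almostorthogonality} into operator-norm bounds. The kernel of $T_j T_\ell^*$ is precisely the function $b_{j,\ell}$ appearing in that theorem, which supplies $\sup_x \int_X |b_{j,\ell}(x,y)|\,d\mu(y) \leq C(\Kb) 2^{-|j-\ell|}$. The matching bound $\sup_y \int_X |b_{j,\ell}(x,y)|\,d\mu(x) \leq C(\Kb) 2^{-|j-\ell|}$ follows from the pointwise identity $b_{j,\ell}(x,y) = \overline{b_{\ell,j}(y,x)}$ and a re-application of Theorem \ref{almostorthogonality} with the roles of $j$ and $\ell$ exchanged. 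Schur's test then yields
\[
    \|T_j T_\ell^*\|_{L_2 \to L_2} \leq C(\Kb) 2^{-|j-\ell|}.
\]
For the dual estimate, observe that $T_j^* = \Op(K_j^*)$ where $K_j^*(x,y) = \overline{K_j(y,x)}$ is the $j$-th entry of $\Kb^*$, and a direct computation shows that the kernel of $T_j^* T_\ell$ coincides with the analog of $b_{j,\ell}$ built from $\Kb^*$ in place of $\Kb$. Since $\Kb^*$ satisfies Definition \ref{kernelconditions} with the same constants by hypothesis, re-running the same Schur argument with $\Kb^*$ in place of $\Kb$ yields $\|T_j^* T_\ell\|_{L_2 \to L_2} \leq C(\Kb) 2^{-|j-\ell|}$.

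With these bilateral almost-orthogonality bounds in hand, set $S_j := \alpha_j T_j$ so that $\|S_j S_\ell^*\|, \|S_j^* S_\ell\| \leq (\max_k |\alpha_k|)^2 C(\Kb) 2^{-|j-\ell|}$. The Cotlar-Stein lemma applied to the finite family $\{S_j\}$ then produces an estimate of the form $\|\Op(\Kb_\alpha)\|_{L_2 \to L_2} \leq C' \cdot C(\Kb)^{1/2} \max_k |\alpha_k|$ for some absolute constant $C'$. The main obstacle I anticipate is matching the precise constant $2^{1/2}$ claimed in the statement: the routine Cotlar-Stein bookkeeping with the bilateral geometric series $\sum_{k \in \Itgr} 2^{-|k|/2} = 3 + 2\sqrt{2}$ gives a strictly larger $C'$. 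Sharpening to $2^{1/2}$ likely requires a direct $RR^*$-type computation that exploits the self-adjointness of $RR^*$ to pair $(j,\ell)$ with $(\ell,j)$ and reduce to a one-sided sum $\sum_{k \geq 0} 2^{-k} = 2$, rather than the standard bilateral Cotlar-Stein estimate.
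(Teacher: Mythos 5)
Your proposal is essentially the same argument as the paper's: both derive the operator-norm bounds $\|T_jT_\ell^*\|_{L_2\to L_2}, \|T_j^*T_\ell\|_{L_2\to L_2} \leq C(\Kb)2^{-|j-\ell|}$ from Theorem \ref{almostorthogonality} and then invoke Cotlar--Knapp--Stein. The only cosmetic difference is that you apply Schur's test directly, while the paper phrases the same estimate as an $L_\infty\to L_\infty$ bound, a dual $L_1\to L_1$ bound, and Riesz--Thorin interpolation; these are the same inequality proved two ways. Your identity $b_{j,\ell}(x,y)=\overline{b_{\ell,j}(y,x)}$, used to get the second Schur integral, is exactly what the paper's ``dually'' step amounts to.

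Your concern about the numerical constant is legitimate and, if anything, you are more careful here than the paper. Standard Cotlar--Stein bookkeeping with $\max\bigl(\|T_jT_\ell^*\|,\|T_j^*T_\ell\|\bigr)^{1/2}\leq C(\Kb)^{1/2}2^{-|j-\ell|/2}$ yields $\|\Op(\Kb_\alpha)\|\leq C(\Kb)^{1/2}\max_j|\alpha_j|\sum_{k\in\Itgr}2^{-|k|/2}=(3+2\sqrt2)\,C(\Kb)^{1/2}\max_j|\alpha_j|$, not $\sqrt2\,C(\Kb)^{1/2}\max_j|\alpha_j|$. The paper's own proof also simply writes ``follows from \ldots Cotlar--Knapp--Stein almost-orthogonality'' without deriving the factor $2^{1/2}$, and the direct $RR^*$ computation you suggest as a possible sharpening does not obviously close this gap either (summing the kernel bound over all $(j,\ell)$ is not $O(1)$). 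The stated constant appears to be an imprecision in the theorem; it is harmless, since everything downstream (Corollaries \ref{weak_type_corollary} and \ref{abstract_L_p_boundedness}) only uses that the bound is a function of $C(\Kb)$ and $\max_j|\alpha_j|$, not its exact value.
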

    \begin{proof}
        Assume without loss of generality that $\max_j\{|\alpha_j|\} \leq 1.$
        We have
        \[
            \Op(\Kb_{\alpha}) = \sum_{j\in \Ntrl} \alpha_j\Op(K_j).
        \]
        Note that
        \[
            \|\Op(K_j)\Op(K_{\ell})^*\|_{L_{\infty}(X)\to L_{\infty}(X)} = \sup_{x\in X} \int_X \left| \int_X K_j(x,y_0)\overline{K_{\ell}(y,y_0)}\,d\mu(y_0)\right|\,d\mu(y).
        \]
        By Theorem \ref{almostorthogonality},
        \[
            |\alpha_j\alpha_\ell|\|\Op(K_j)\Op(K_\ell)^*\|_{L_{\infty}(X)\to L_{\infty}(X)} \leq C(\Kb)2^{-|j-\ell|},\quad j,\ell\in \Ntrl.
        \]
        Dually, we have
        \[
            |\alpha_j\alpha_\ell|\|\Op(K_j)\Op(K_\ell)^*\|_{L_{1}(X,\mu)\to L_{1}(X,\mu)} \leq C(\Kb)2^{-|j-\ell|},\quad j,\ell\in \Ntrl.
        \]
        By complex interpolation, it follows that
        \[
            |\alpha_j\alpha_\ell|\|\Op(K_j)\Op(K_\ell)^*\|_{L_{2}(X,\mu)\to L_{2}(X,\mu)} \leq C(\Kb)2^{-|j-\ell|},\quad j,\ell\in \Ntrl.
        \]
        Exchanging the roles of $\Kb$ and $\Kb^*$ gives
        \[
            |\alpha_j\alpha_\ell|\|\Op(K_j)^*\Op(K_\ell)\|_{L_{2}(X,\mu)\to L_{2}(X,\mu)} \leq C(\Kb)2^{-|j-\ell|},\quad j,\ell\in \Ntrl.
        \]
        That is,
        \begin{align*}
            \|\Op(\alpha_jK_j)\Op(\alpha_\ell K_\ell)^*\|_{L_{2}(X,\mu)\to L_{2}(X,\mu)}&+\|\Op(\alpha_j K_j)^*\Op(\alpha_{\ell}K_\ell)\|_{L_{2}(X,\mu)\to L_{2}(X,\mu)}\\
                                                                                        &\leq 2C(\Kb)2^{-|j-\ell|}.
        \end{align*}

        The $L_2$-boundedness of the sum $\Kb_{\alpha}$ follows from this estimate and Cotlar-Knapp-Stein almost-orthogonality.
    \end{proof}

    \begin{corollary}\label{weak_type_corollary}
        Let $\Kb$ and $\Kb^*$ satisfy Conditions \eqref{kernelsupport}, \eqref{kernelmass} and \eqref{kernelsmoothnessballvolume}. Definition \ref{kernelconditions}, with the same constants.        
        Then for any finitely supported sequence $\{\alpha_j\}_{j\in \Ntrl},$ if $\Op(\Kb_{\alpha})$ is bounded on $L_2(X),$ then $\Op(\Kb_{\alpha})$ has weak-type $(1,1)$ with norm depending only on $C_1(\Kb)C_3(\Kb),$ on $C_2(\Kb),$ on $C_1(\Kb)/M(\Kb)$
        on $\|\Op(\Kb_{\alpha})\|_{L_2(X)\to L_2(X)}$ and on $\sup_j|\alpha_j|.$
    \end{corollary}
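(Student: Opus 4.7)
The plan is to follow the classical Calder\'on-Zygmund strategy: we already have $L_2$-boundedness by hypothesis, and Lemma \ref{coifman_weiss_condition} supplies exactly the H\"ormander-type smoothness estimate on $\Kb_\alpha$ required to invoke the Calder\'on-Zygmund theorem on the space of homogeneous type $(X,\rho,\mu)$ as in \cite[Chapter III, Theorem (2.4)]{CoifmanWeiss1971}. In principle this result can be cited directly, but to control the constants explicitly in the manner stated, I would reproduce the standard argument.

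First I would fix $f \in L_1 \cap L_2$ (a dense subspace) with $\|f\|_1 = 1$ and $\lambda > 0$, and apply the Calder\'on-Zygmund decomposition on $(X,\rho,\mu)$, yielding $f = g + \sum_k b_k$ where $\|g\|_\infty \leq C\lambda$, each $b_k$ is supported in a ball $B_k = B_X(x_k, r_k)$ with $\int b_k \, d\mu = 0$ and $\|b_k\|_1 \leq C\lambda\mu(B_k)$, and $\sum_k \mu(B_k) \leq C\lambda^{-1}$. The existence of such a decomposition uses only the doubling property $C_\mu$ and the quasi-metric constant $C_\rho$. For the good part, Chebyshev and $L_2$-boundedness give
\[
\mu\{|\Op(\Kb_\alpha)g|>\lambda/2\} \leq 4\lambda^{-2}\|\Op(\Kb_\alpha)\|_{L_2\to L_2}^2 \|g\|_\infty\|g\|_1 \lesssim \lambda^{-1}\|\Op(\Kb_\alpha)\|_{L_2\to L_2}^2.
\]

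For the bad part, I would set $E = \bigcup_k B_X(x_k, 2C_\rho r_k)$, so by doubling $\mu(E) \leq C_\mu' \sum_k \mu(B_k) \leq C\lambda^{-1}$. Outside $E$, the mean-zero property lets me rewrite
\[
\Op(\Kb_\alpha)b_k(x) = \int_{B_k} \bigl(\Kb_\alpha(x,y) - \Kb_\alpha(x,x_k)\bigr) b_k(y)\, d\mu(y),
\]
and Lemma \ref{coifman_weiss_condition} (applied pointwise in $y \in B_k$, using $\rho(x,x_k) > 2C_\rho \rho(y,x_k)$ on the complement of $E$) yields
\[
\int_{X\setminus E}\bigl|\Op(\Kb_\alpha)b_k(x)\bigr|\,d\mu(x) \leq C(\Kb)\sup_j|\alpha_j|\cdot\|b_k\|_1.
\]
Summing over $k$ and applying Chebyshev again gives $\mu\{x\in X\setminus E : |\Op(\Kb_\alpha)b(x)|>\lambda/2\} \lesssim \lambda^{-1}\sum_k\|b_k\|_1 \leq C\lambda^{-1}$. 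Combining the two parts with the trivial bound $\mu(E)\leq C\lambda^{-1}$ yields the weak-type $(1,1)$ estimate with the claimed dependence of the constant.

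The argument is entirely standard once Lemma \ref{coifman_weiss_condition} is available, so there is no serious obstacle; the only point requiring care is the verification of the Calder\'on-Zygmund decomposition in the quasi-metric doubling setting (which is well-known from \cite{CoifmanWeiss1971}) and the bookkeeping of constants so that the final bound depends only on the listed quantities $C_1(\Kb)C_3(\Kb)$, $C_2(\Kb)$, $C_1(\Kb)/M(\Kb)$, $\|\Op(\Kb_\alpha)\|_{L_2\to L_2}$ and $\sup_j|\alpha_j|$.
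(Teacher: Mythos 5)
Your proposal is correct and takes essentially the same approach as the paper: the paper's proof simply observes that Lemma \ref{coifman_weiss_condition} together with the assumed $L_2$-bound puts $\Kb_\alpha$ within the hypotheses of \cite[Chapter III, Theorem (2.4), p.74]{CoifmanWeiss1971}, whereas you have unwound the proof of that cited theorem (Calder\'on--Zygmund decomposition, good/bad split, H\"ormander condition on the bad part) to track the constants explicitly, but the ingredients and their roles are identical.
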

    \begin{proof}
        Lemma \ref{coifman_weiss_condition} and Theorem \ref{abstract_L_2_boundedness} show that the kernel $\Kb_{\alpha}$ satisfies all of the conditions of \cite[Chapter III, Theorem (2.4), p.74]{CoifmanWeiss1971}.
        The weak-type estimate follows from that theorem.
    \end{proof}

    \begin{corollary}\label{abstract_L_p_boundedness}
        Let $\Kb$ and $\Kb^*$ satisfy all of the conditions of Definition \ref{kernelconditions} and let $\{\alpha_j\}_{j\in \Ntrl}\subset \Cplx$ be a finitely supported sequence. Then $\Op(\Kb_{\alpha})$ is bounded on $L_p(X,\mu)$ for all $1<p<\infty,$ and there exists a constant $C(\Kb,\max_j|\alpha_j|)$
        depending only on the quantities
        \begin{align*}
            C_1(\Kb)C_3(\Kb), C_2(\Kb), C_2(\Kb)C_4(\Kb), C_1(\Kb)M(\Kb)^{-1}\text{ and }\sup_j|\alpha_j|.
        \end{align*}
        Moreover,
        \[
            \|\Op(\Kb_{\alpha})\|_{L_p(X,\mu)\to L_p(X,\mu)} \leq C(\Kb,\max_j|\alpha_j|)\frac{p^2}{p-1},\quad 1<p<\infty.
        \]
    \end{corollary}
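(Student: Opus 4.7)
The plan is to combine the $L_2$ boundedness (Theorem \ref{abstract_L_2_boundedness}) with the weak-type $(1,1)$ estimate (Corollary \ref{weak_type_corollary}), apply Marcinkiewicz interpolation to handle $1<p\leq 2$, and then use duality together with the fact that $\Kb^*$ satisfies the same hypotheses to cover $2\leq p<\infty$.

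First, I would observe that by Theorem \ref{abstract_L_2_boundedness}, $\Op(\Kb_{\alpha})$ is bounded on $L_2(X,\mu)$ with a norm depending only on the quantities listed in the statement (these are exactly the quantities controlling $C(\Kb)$ in Theorem \ref{almostorthogonality}). Then, since $\Kb$ and $\Kb^*$ satisfy \eqref{kernelsupport}, \eqref{kernelmass}, \eqref{kernelsmoothnessballvolume}, Corollary \ref{weak_type_corollary} applies and yields a weak-type $(1,1)$ estimate for $\Op(\Kb_{\alpha})$, with constant depending only on the admissible quantities.

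Next, I would apply the Marcinkiewicz interpolation theorem between the weak-type $(1,1)$ endpoint and the strong-type $(2,2)$ endpoint. This produces strong-type $(p,p)$ for every $1<p\leq 2$. The sharp form of Marcinkiewicz interpolation yields a bound on the operator norm of the shape
\[
    \|\Op(\Kb_{\alpha})\|_{L_p\to L_p}\;\leq\; C(\Kb,\sup_j|\alpha_j|)\cdot \frac{p}{p-1},\qquad 1<p\leq 2,
\]
where the constant only depends on the listed quantities. For $2\leq p<\infty$, I would argue by duality: the adjoint of $\Op(\Kb_{\alpha})$ is $\Op(\overline{\Kb_{\alpha}(y,x)})=\Op(\Kb^*_{\overline{\alpha}})$, and by hypothesis $\Kb^*$ (with the same constants as $\Kb$) satisfies all conditions of Definition \ref{kernelconditions}. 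Therefore, the first part applies to $\Op(\Kb^*_{\overline{\alpha}})$ on $L_{p'}$ for $1<p'\leq 2$ with constant $\lesssim \frac{p'}{p'-1}=p$. Taking adjoints gives the same bound for $\Op(\Kb_{\alpha})$ on $L_p$. Combining the two cases and using $\frac{p}{p-1}\leq \frac{p^2}{p-1}$ and $p\leq \frac{p^2}{p-1}$ for the respective ranges yields the stated bound.

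The only genuinely subtle point is tracking the dependence of the constant through Marcinkiewicz interpolation: one must verify that the interpolated constant really behaves like $\frac{1}{p-1}$ near $p=1$ and, via duality, like $p$ near $p=\infty$. This follows from a standard inspection of the proof of Marcinkiewicz interpolation (e.g., by truncating at level $\lambda$ and splitting into the $L^1$ and $L^2$ pieces, then optimizing in $\lambda$), so it is really a matter of bookkeeping rather than a new estimate. Everything else—$L_2$-boundedness, weak-type $(1,1)$, and the symmetry between $\Kb$ and $\Kb^*$—has already been established in the preceding results.
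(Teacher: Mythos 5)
Your proposal is correct and follows essentially the same route as the paper: Marcinkiewicz interpolation between the weak-type $(1,1)$ bound of Corollary \ref{weak_type_corollary} and the $L_2$ bound of Theorem \ref{abstract_L_2_boundedness} for $1<p\leq 2$, then duality using the symmetry of the hypotheses in $\Kb$ and $\Kb^*$ for $2\leq p<\infty$. Your extra care in tracking how the constant degenerates like $\frac{1}{p-1}$ near $p=1$ and like $p$ near $p=\infty$ is consistent with (and slightly more explicit than) the paper's stated bound $C\,\frac{p^2}{p-1}$.
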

    \begin{proof}
        Combining Theorem \ref{abstract_L_2_boundedness} and Corollary \ref{weak_type_corollary}, we get the result for $1<p<2$ by Marcinkiewicz interpolation. Since the assumptions are symmetric in $\Kb$ and $\Kb^*,$ we get the same for the adjoint and hence by duality we have the bound for $2<p<\infty.$
    \end{proof}

    It is also worth mentioning the following much easier estimate.
    \begin{lemma}\label{triangle_inequality_lemma}
        Let $\Kb$ and $\Kb^*$ satisfy condition \eqref{kernelmass} of Definition \ref{kernelconditions}. Then for all $1\leq p\leq \infty,$ we have
        \[
            \|\Op(\Kb_{\alpha})\|_{L_p(X,\mu)\to L_{p}(X,\mu)} \leq C_2(\Kb)\sum_{j\in \Ntrl} |\alpha_j|.
        \]
    \end{lemma}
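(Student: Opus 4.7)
The plan is to reduce the estimate to a Schur-type bound applied termwise to the sum $\Kb_{\alpha} = \sum_{j} \alpha_j K_j$ and then combine via the triangle inequality. Since the sum is finite (as $\{\alpha_j\}$ is finitely supported), all manipulations are justified without convergence issues.

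First I would observe that Condition \eqref{kernelmass} for $\Kb$ gives
\[
    \sup_{x\in X}\int_X |K_j(x,y)|\,d\mu(y) \leq C_2(\Kb),
\]
while the same condition applied to $\Kb^*$, whose entries are $(x,y)\mapsto \overline{K_j(y,x)}$, gives (after relabeling the integration variable)
\[
    \sup_{y\in X}\int_X |K_j(x,y)|\,d\mu(x) \leq C_2(\Kb^*) = C_2(\Kb).
\]
Thus each $K_j$ satisfies the hypotheses of Schur's test with identical constants on both rows and columns.

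The main step is then Schur's test itself: for any $1\le p\le \infty$, these two uniform integrability bounds imply $\|\Op(K_j)\|_{L_p(X,\mu)\to L_p(X,\mu)} \leq C_2(\Kb)$. The endpoints $p=1$ and $p=\infty$ follow immediately from Fubini/Tonelli, and the intermediate case follows either by Riesz-Thorin interpolation or directly by H\"older's inequality applied inside the integral defining $\Op(K_j)u(x)$.

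Finally, the triangle inequality on the operator norm gives
\[
    \|\Op(\Kb_{\alpha})\|_{L_p\to L_p} \leq \sum_{j\in \Ntrl} |\alpha_j|\,\|\Op(K_j)\|_{L_p\to L_p} \leq C_2(\Kb)\sum_{j\in \Ntrl} |\alpha_j|,
\]
which is the claimed bound. There is no real obstacle here; the only point to be careful about is invoking Condition \eqref{kernelmass} on $\Kb^*$ to get the column-integrability of $K_j$, but this is exactly why the hypothesis is stated symmetrically for $\Kb$ and $\Kb^*$.
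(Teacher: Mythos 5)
Your proposal is correct and matches the paper's proof essentially verbatim: reduce to a single $K_j$ by the triangle inequality, get the $L_\infty$ bound from Condition (II) for $\Kb$ and the $L_1$ bound from Condition (II) for $\Kb^*$, then interpolate (Schur's test). No further comment is needed.
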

    \begin{proof}
        By subadditivity, it is enough to prove the result for a single $K_j.$ The assumption \eqref{kernelmass} asserts the result for $p=\infty,$ and since the same holds for $\Kb^*$ we also have the result for $p=1.$ The general
        case follows from interpolation.
    \end{proof}

\section{Kernels derived from graded bases}\label{AMY_section}
Let $d\geq 1,$ and let $U$ be an open subset of $\Rl^d.$ Let $N\geq d,$ and let $\{X_1,\ldots,X_N\}$ be a family of smooth vector fields on $U$ which span the tangent space at every point of $U.$ Let $w_1\leq \ldots\leq w_N$ be positive integers. Let $|\cdot|$ be the usual (Euclidean) norm on $\Rl^d.$
\subsection{Metric associated to a family of vector fields}
Let $V=\Rl^N = \mathrm{span}\{e_1,\ldots,e_N\}$ be an $N$-dimensional vector space, and equip $V$ with the family of dilations defined by
\[
    \delta_t = \bigoplus_{j=1}^N t^{w_j},\quad t > 0.
\]
That is, $\delta_te_j = t^{w_j}e_j.$

Fix a norm $|\cdot|_V$ on $V$ which is homogeneous with respect to $\delta,$ i.e. $|\delta_tv|_V = t|v|_V$ for all $t>0.$ 
Let $Q = \sum_{j=1}^N w_j.$ Note that $Q\geq N\geq d.$ 
We will write $B_V(0,r)$ for the ball in $V$ with respect to the homogeneous norm $|\cdot|_V.$ 
Since $\mathrm{det}(\delta_t) = t^Q,$ the Lebesgue measure of $B_V(0,r)$ is proportional to $r^Q.$

Let $\sharp$ denote the linear map from $V$ to the Lie algebra of vector fields on $U$ given by
\[
    \sharp(e_j) = X_j,\quad 1\leq j\leq N.
\]

For any $x\in U,$ there exists $\varepsilon_x>0$ such that the map defined by
\[
    \Lambda_x:B_V(0,\varepsilon_x)\to U,\, \Lambda_x(z) := \exp(\sharp z)x.
\]
exists and is a submersion. Here, $\exp$ is the exponential of vector fields (i.e., the flow at time $1$). We can see that $\Lambda_x(z)$ exists for sufficiently small $|z|_V$ because $\sharp z$ is smooth. It is also clear that the derivative of $\Lambda_x(z)$ at $z=0$ is surjective since $\{X_1,\ldots,X_N\}$ spans the tangent space at $x.$ Hence, $\Lambda_x(z)$ exists and is a submersion for sufficiently small $|z|_V.$ For further details, see \cite[Page 39]{Goodman-lnm-1976}. 

Since $\Lambda_x$ is a submersion, the pre-image $\Lambda_x^{-1}(y)$ of a point $y\in U$ is a (possibly empty) $(N-d)$-dimensional submanifold of $B_V(0,\varepsilon_x).$ Note that whether or not $\Lambda_x^{-1}(y)$ is empty will depend on the choice of $\varepsilon_x.$
Let $D\Lambda_x$ be the Jacobian matrix of $\Lambda_x;$ since $\Lambda_x$ is a submersion, $D\Lambda_x(z)$ is a $d\times N$-matrix of full rank. Define
\[
    |D\Lambda_x(z)| := \det(D\Lambda_x(z)D\Lambda_x(z)^{\top})^{\frac12},\quad z \in B_V(0,\varepsilon_x).
\]
Here, $\top$ is the matrix transpose.
The co-area formula asserts that
\[
    \int_V g(z)f(\Lambda_x(z))\,dz = \int_{U} f(y)\int_{\Lambda_x^{-1}(y)} g(z)|D\Lambda_x(z)|^{-1}\,d\Hc^{N-d}(z)\,dy,\quad g \in C^\infty_c(B_V(0,\varepsilon_x)),\, f \in C^\infty_c(U).
\]
Here $\Hc^{N-d},$ is the $(N-d)$-dimensional Hausdorff measure on the submanifold $\Lambda_x^{-1}(y).$
See \cite[Theorem 3.2.12]{Federer-1969}. We will frequently refer to this measure and so we introduce the following notation.
\begin{definition}\label{disintegrated_measure_definition}
    Let $x,y\in U.$ Let $\mu^{x,y}$ denote the following measure on $\Lambda_x^{-1}(y)\subset B_V(0,\varepsilon_x)$
    \[
        d\mu^{x,y}(z) := \frac{d\Hc^{N-d}(z)}{|D\Lambda_x(z)|}.
    \]

\end{definition}
With this notation, the coarea formula for $\Lambda_x$ asserts that
\begin{equation}\label{coarea_formula}
    \int_V g(z)f(\Lambda_x(z))\,dz = \int_U f(y)\int_{\Lambda_x^{-1}(y)} g(z)\,d\mu^{x,y}(z)\,dy,\quad g \in C^\infty_c(B_V(0,\varepsilon_x)),\, f \in C^\infty_c(U).
\end{equation}

We record a few essential features of $\mu^{x,y}.$
\begin{lemma}\label{symmetry_of_Lambda}
    Let $x,y \in U.$ We have
    \[
        \Lambda_x^{-1}(y)\cap B_V(0,\min\{\varepsilon_x,\varepsilon_y\}) = -\Lambda_y^{-1}(x)\cap B_V(0,\min\{\varepsilon_x,\varepsilon_y\})
    \]
    and $d\mu^{x,y}(z) = d\mu^{y,x}(-z)$ on $B_V(0,\min\{\varepsilon_x,\varepsilon_y\}).$
\end{lemma}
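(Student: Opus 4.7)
The first identity is immediate from the invertibility of the exponential flow. Writing $\phi_z := \exp(\sharp z)$, for $|z|_V < \min\{\varepsilon_x, \varepsilon_y\}$ both $\phi_z$ and its inverse $\phi_{-z} = \phi_z^{-1}$ are defined at $x$ and at $y$ respectively, so $\Lambda_x(z) = \phi_z(x) = y$ is equivalent to $x = \phi_{-z}(y) = \Lambda_y(-z)$. Hence $\Lambda_x^{-1}(y) = -\Lambda_y^{-1}(x)$ on the common ball.

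For the measure identity, I would first relate the Jacobian densities $|D\Lambda_x(z)|$ and $|D\Lambda_y(-z)|$ via a chain-rule computation. Differentiating the identity $\phi_z \circ \phi_{-z} = \mathrm{id}$ with respect to the $i$-th coordinate of $z$, and evaluating at $\phi_{-z}(y) = x$, one obtains the vector identity $\partial_{z_i}\Lambda_x(z) = (d\phi_z)(x)\cdot \partial_{z_i}\Lambda_y(-z)$. Collecting the $N$ coordinate directions gives the $d\times N$ matrix factorization
\[
D\Lambda_x(z) = (d\phi_z)(x)\cdot D\Lambda_y(-z),
\]
and taking $|\cdot| = \sqrt{\det(MM^\top)}$ on both sides yields $|D\Lambda_x(z)| = |\det (d\phi_z)(x)|\cdot|D\Lambda_y(-z)|$. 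Since $z\mapsto -z$ is a Euclidean isometry of $V$ preserving $\Hc^{N-d}$, the desired identity $d\mu^{x,y}(z) = d\mu^{y,x}(-z)$ then amounts to the statement that the Jacobian factor $|\det (d\phi_z)(x)|$ is absorbed correctly.

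The main obstacle is precisely this absorption, since the flow $\phi_z$ need not be volume preserving pointwise. The plan is to verify the identity in integrated form: test both sides against $f \in C^\infty_c(U\times U)$ and $g \in C^\infty_c(V)$ supported in $B_V(0, \min\{\varepsilon_x, \varepsilon_y\})$, apply the coarea formula \eqref{coarea_formula} for $\Lambda_x$ to rewrite one side as an integral $\int_U\int_V f(x,\Lambda_x(z))g(z)\,dz\,dx$, and then perform the diffeomorphic change of variables $(x,z) \mapsto (\Lambda_x(z),-z) = (y,w)$ on $U\times V$. The Jacobian of this change is $|\det (d\phi_z)(x)|$, and after rewriting the result through the coarea formula in reverse for $\Lambda_y$, this factor should cancel exactly against the matrix identity above, delivering the measure equality. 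Carrying out this bookkeeping cleanly is the technical heart of the proof; I would expect the compatibility between the coarea Jacobian of $\Lambda_x$ and the change-of-variables Jacobian of $\phi_z$ to be the step requiring the most care.
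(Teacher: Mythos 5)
Your proof of the set identity is correct and matches the paper's: the paper also argues via $\exp(\sharp(-z))\exp(\sharp z)x = x$.

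For the measure identity, the paper gives no argument beyond ``follows from \eqref{coarea_formula},'' and your attempt to actually carry this out correctly isolates the Jacobian factor $|\det(d\phi_z)(x)|$. It appears both in your chain-rule factorization $D\Lambda_x(z) = (d\phi_z)(x)\,D\Lambda_y(-z)$ and in the change of variables $(x,z)\mapsto(\Lambda_x(z),-z)$. But the cancellation you are hoping for does not occur. Pushing your computation to the end, one finds
\[
    d\mu^{x,y}(z) \;=\; \bigl|\det\bigl((d\phi_z)(x)\bigr)\bigr|^{-1}\,d\mu^{y,x}(-z),
\]
which is exactly what your matrix identity predicts, since both measures are the same Hausdorff measure $\Hc^{N-d}$ on the common fibre divided by $|D\Lambda_x(z)|$ and $|D\Lambda_y(-z)|$ respectively, and $z\mapsto -z$ is an isometry. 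The factor $|\det(d\phi_z)(x)|$ equals $1$ only when the flow $\phi_z=\exp(\sharp z)$ is volume-preserving, which is not assumed. A concrete counterexample: $N=d=1$, $X_1 = x\partial_x$ on $(1,2)$, so $\Lambda_x(z)=e^z x$, $\Lambda_x^{-1}(y)=\{\log(y/x)\}$. Then $\mu^{x,y}$ has mass $1/y$ at that point while $\mu^{y,x}(-\cdot)$ has mass $1/x$. So the measure claim in Lemma \ref{symmetry_of_Lambda} is false as stated, and your instinct that ``carrying out this bookkeeping cleanly'' is the step requiring the most care was right --- that is precisely where the equality breaks.

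The error is, however, harmless for the paper. Under Assumption \ref{everything_is_connected} ($U$ has compact closure, $\varepsilon_x\geq \varepsilon>0$), the smooth factor $|\det(d\phi_z)(x)|$ is bounded above and away from zero uniformly for $x\in U$, $|z|_V<\varepsilon$, so $\mu^{x,y}$ and $\mu^{y,x}(-\cdot)$ are uniformly comparable. The two uses of the lemma --- the symmetry $F(x,y)=F(y,x)$ in the proof of Lemma \ref{disintegration_of_a_ball}, and the rewriting of $\overline{K_j(y,x)}$ in the proof of Theorem \ref{AMY_zero_order_bounded} --- only need this comparability, since in both cases only $L^1$/$L^\infty$ bounds with uniform constants are extracted. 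So the downstream estimates survive, but the lemma should be corrected to the Radon--Nikodym identity displayed above (equivalently, $|D\Lambda_x(z)|\,d\mu^{x,y}(z) = |D\Lambda_y(-z)|\,d\mu^{y,x}(-z)$).
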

\begin{proof}
    By definition, $z \in \Lambda_x^{-1}(y)$ if and only if $|z|_V < \varepsilon_x$ and $\exp(\sharp z)x=y.$ Since $\exp(\sharp (-z))\exp(\sharp z)x = x,$ we have $\exp(\sharp (-z))y = x,$ and hence if $|-z|<\varepsilon_y,$ we have     
    $-z\in \Lambda_y^{-1}(x).$ This proves that if $|z|_V < \min\{\varepsilon_x,\varepsilon_y\}$ and $z \in \Lambda_x^{-1}(y),$ then $-z\in \Lambda_x^{-1}(y).$ 
    By symmetry, we conclude the converse, and this proves the first claim.   

    The fact that $d\mu^{x,y}(z)=d\mu^{y,x}(-z)$ on $B_V(0,\min\{\varepsilon_x,\varepsilon_y\})$ follows from \eqref{coarea_formula}.
\end{proof}

Using $\Lambda$ we can also define the following function on $U.$
\begin{definition}\label{definition_of_the_metric}
    Let $x,y\in U.$ If $y \in \Lambda_x(B_V(0,\varepsilon_x)),$ define
    \[
        \rho(x,y) := \inf\{\lambda>0\;:\; y \in \Lambda_x(B_V(0,\lambda))\}.
    \]
    Otherwise, $\rho(x,y)=\infty.$
\end{definition}
Equivalently, $\rho(x,y)\leq C<\infty$ if $C\leq \varepsilon_x$ and if there exists $z \in \overline{B_V(0,C)}$ such that $\Lambda_x(z)=y.$ Observe that $\rho$ is precisely the function denoted $\rho_2$ in \cite[Section 4]{NagelSteinWainger1985}. The same metric was used by Street, see \cite[Section 1.3]{Street-maximal-subellipticity-2023}.

We want to use $\rho$ as a quasi-metric on $U.$ As defined, $\rho$ may fail to be a quasi-metric for the obvious reason that $\rho(x,y)$ may be infinite. A more subtle problem is that $\rho$ is not necessarily symmetric: it might be that $y\in \Lambda_x(B_V(0,\varepsilon_x))$ but $x\notin \Lambda_y(B_V(0,\varepsilon_y)).$ 

We deal with this issue by assuming that $U$ is as small as necessary. Theorem 2 of \cite{NagelSteinWainger1985} implies the following:
\begin{theorem}\label{everything_can_be_connected}
    Every point $x\in U$ has a neighbourhood $U_0\subset U$ such that $\rho(y_0,y_1)<\infty$ for all $y_0,y_1\in U_0,$ and $\inf_{y\in U_0} \varepsilon_y >0.$ Moreover, $\rho(y_0,y_1)=\rho(y_1,y_0)$ for all $y_0,y_1\in U.$
\end{theorem}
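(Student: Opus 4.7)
The plan is to extract the three clauses separately, using smooth dependence of the flow of $\sharp z$ on its data and the fact that $\Lambda_x$ is a submersion at $z=0$. I will not attempt to reproduce the full Nagel-Stein-Wainger analysis, just the three statements needed here.

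First, for $\inf_{y\in U_0}\varepsilon_y>0$: the map $(y,z)\mapsto \exp(\sharp z)y$ is smooth on an open neighborhood of $U\times\{0\}$ in $U\times V$ by smooth dependence of ODEs on initial data and parameters. Fixing $x\in U$, one obtains a neighborhood $U_1$ of $x$ and some $\varepsilon>0$ such that $\Lambda_y(z)$ is defined for every $(y,z)\in U_1\times B_V(0,\varepsilon)$, whence $\varepsilon_y\geq \varepsilon$ for all $y\in U_1$.

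Second, for local finiteness of $\rho$: because $X_1,\ldots,X_N$ span the tangent space at every point, the differential $D\Lambda_x(0)=\sharp:V\to T_xU$ is surjective, so $\Lambda_x$ is a submersion at $z=0$ and its image contains a neighborhood of $x$. Continuous dependence on the base point persists this property on a smaller neighborhood $U_0\subset U_1$: one can arrange $U_0\subset \Lambda_y(B_V(0,\varepsilon/2))$ for every $y\in U_0$, which gives $\rho(y_0,y_1)\leq \varepsilon/2$ for all $y_0,y_1\in U_0$.

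Third, for symmetry, once $U$ has been shrunk (i.e., replaced by this $U_0$) so that $\inf_{y\in U}\varepsilon_y\geq \varepsilon$ and every $\rho(y_0,y_1)$ is bounded above by $\varepsilon/2<\varepsilon$, the claim follows from Lemma \ref{symmetry_of_Lambda}: if $z\in \Lambda_{y_0}^{-1}(y_1)\cap B_V(0,\lambda)$ with $\lambda\leq \varepsilon/2$, then $|{-}z|_V=|z|_V<\lambda\leq\min\{\varepsilon_{y_0},\varepsilon_{y_1}\}$, so the lemma gives $-z\in\Lambda_{y_1}^{-1}(y_0)\cap B_V(0,\lambda)$ and hence $\rho(y_1,y_0)\leq\lambda$. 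Taking the infimum over $\lambda$ and exchanging the roles of $y_0$ and $y_1$ yields $\rho(y_0,y_1)=\rho(y_1,y_0)$. The main obstacle is not any single step but rather the bookkeeping around the non-canonical definition of $\rho$, whose value depends on the choice of $\varepsilon_x$; the author's ``$U$ is as small as necessary'' convention reconciles all three clauses simultaneously, and the full statement is also a direct consequence of the chaining arguments in \cite{NagelSteinWainger1985}.
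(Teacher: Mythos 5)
The paper itself does not prove this theorem: it simply invokes Theorem~2 of Nagel--Stein--Wainger and moves on. Your proposal supplies a direct, self-contained argument in its place, which is a genuinely different route, and for the three modest clauses actually needed here it is arguably more transparent than appealing to the full chaining machinery of \cite{NagelSteinWainger1985}. The overall structure is sound, and in particular the derivation of symmetry from Lemma~\ref{symmetry_of_Lambda} once $\rho<\varepsilon/2$ holds on $U_0$ is exactly right. Two gaps should be filled, however.

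First, recall that $\varepsilon_y$ is defined so that $\Lambda_y$ not only \emph{exists} on $B_V(0,\varepsilon_y)$ but is also a \emph{submersion} there. Your smooth-dependence argument controls only existence. You should add that $D\Lambda_y(0)=\sharp$ is surjective for every $y$ (by the spanning hypothesis), that $D\Lambda_y(z)$ depends continuously on $(y,z)$, and that full rank is an open condition; hence after shrinking $U_1$ and $\varepsilon$ once more, $D\Lambda_y(z)$ has rank $d$ for all $(y,z)\in U_1\times B_V(0,\varepsilon)$, which is what gives $\varepsilon_y\geq\varepsilon$.

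Second, the inclusion $U_0\subset\Lambda_y(B_V(0,\varepsilon/2))$ for every $y\in U_0$ is asserted but not established; ``continuous dependence persists this property'' is not by itself a proof, since you need the image to contain a neighbourhood of $y$ of \emph{uniform} size, not merely an open set that shrinks with $y$. A clean way to get this: the map $\Phi(y,z)=(y,\Lambda_y(z))$ has Jacobian of block form with $I_d$ and $D\Lambda_y(z)$ on the diagonal, hence is a submersion near $(x,0)$; therefore $\Phi(U_1\times B_V(0,\varepsilon/2))$ is open and contains $(x,x)$, so it contains $U_0\times U_0$ for some smaller neighbourhood $U_0$ of $x$. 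That is precisely the uniform inclusion you need, and with these two repairs the proposal gives a complete elementary proof of all three clauses.
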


Henceforth, replacing $U$ with $U_0$ if necessary, we will make the following assumption:
\begin{assumption}\label{everything_is_connected}
Assume that $U$ is small enough so that $\rho$ is finite and symmetric on $U,$ and also so that $\inf_{x\in U} \varepsilon_x >0.$

Replacing $\varepsilon_x$ by $\inf_{x\in U}\varepsilon_x,$ we assume without loss of generality that $\varepsilon=\varepsilon_x$ is constant for $x\in U.$ 

We also assume that $U$ has compact closure, in order that all of the derivatives of $\Lambda_x$ with respect to $x$ are uniformly bounded over $x \in U.$

\end{assumption}
Ultimately there will be no harm in making this assumption, since our final estimates will take place over compact subsets of $U$ which can be covered by a finite number of sets satisfying Assumption \ref{everything_is_connected}.

Using this new assumption, $\Lambda_x^{-1}(y)$ is a non-empty submanifold of $B_V(0,\varepsilon)$ for every $x,y\in U.$

Now that $\rho(x,y)$ is guaranteed to be finite and symmetric on $U,$ we can use the following theorem.
\begin{theorem}[Nagel-Stein-Wainger] \cite[Section 4]{NagelSteinWainger1985}
    As given in Definition \ref{definition_of_the_metric}, $\rho$ defines a quasi-metric on $U$
    which is uniformly comparable to the Euclidean metric in the sense that there exist constants $0<c<C<\infty$ such that
    \begin{equation}\label{NSW_distance_upper_bound}
        c|x-y| \leq \rho(x,y)\leq C|x-y|^{\frac{1}{w_N}},\quad x,y\in U.
    \end{equation}
    If $\mu$ denotes the Lebesgue measure on $U,$ then $(U,\rho,\mu)$ is a doubling quasi-metric measure space.
\end{theorem}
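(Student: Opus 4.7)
I would prove the three claims --- Euclidean comparability, the quasi-triangle inequality, and doubling of Lebesgue measure --- in that order, since each subsequent step builds on the earlier ones.

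For Euclidean comparability, the lower bound $c|x-y| \leq \rho(x,y)$ is relatively straightforward: pick $z \in V$ with $\Lambda_x(z) = y$ and $|z|_V$ only slightly larger than $\rho(x,y).$ Writing $z = \sum s_j e_j,$ homogeneity of $|\cdot|_V$ under the dilations $\delta_t$ forces $|s_j| \leq C|z|_V^{w_j},$ and for $|z|_V \leq 1$ this gives the Euclidean estimate $|z| \leq C|z|_V$ since $w_j \geq 1.$ A standard ODE estimate applied to the flow of the smooth vector field $\sharp z$ (bounded on the compact closure of $U$) then yields $|x - y| = |\Lambda_x(z) - \Lambda_x(0)| \leq C|z| \leq C|z|_V.$ For the upper bound, the submersion property of $\Lambda_x$ at $0$ together with the implicit function theorem produces $z$ with $\Lambda_x(z) = y$ and Euclidean norm $|z| \leq C|x-y|$; when $|x-y|$ is small each coordinate satisfies $|s_j| \leq 1,$ and then $|s_j|^{1/w_j} \leq |s_j|^{1/w_N} \leq C|x-y|^{1/w_N},$ giving $|z|_V \leq C|x-y|^{1/w_N}.$ Assumption \ref{everything_is_connected} guarantees that the constants can be chosen uniformly in $x.$

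The quasi-triangle inequality is the step I expect to be the main obstacle. Given $z_1, z_2 \in V$ with $\exp(\sharp z_1)x = z_0$ and $\exp(\sharp z_2) z_0 = y,$ one seeks $w \in V$ with $\Lambda_x(w) = y$ and $|w|_V \leq C_\rho(|z_1|_V + |z_2|_V).$ The natural tool is a graded Baker--Campbell--Hausdorff argument: an iterated bracket of weight $m = w_{i_1}+\cdots+w_{i_k}$ appearing in $\log(\exp(\sharp z_2)\exp(\sharp z_1))$ has a coefficient of order $|z_1|_V^{a}|z_2|_V^{b}$ with $a + b = m,$ whose contribution to $|w|_V$ scales like the $m$-th root; since $|z_1|_V^a|z_2|_V^b \leq \max(|z_1|_V,|z_2|_V)^m,$ this contribution is bounded by $\max(|z_1|_V,|z_2|_V) \leq |z_1|_V + |z_2|_V.$ The subtlety is that the vector fields are not nilpotent, so one must truncate the BCH expansion at a depth consistent with the filtration and then close the argument by an implicit-function-theorem bootstrap on $\Lambda_x$ to absorb the higher-order remainder. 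The detailed combinatorics are carried out in \cite[Section 4]{NagelSteinWainger1985}.

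Finally, doubling of Lebesgue measure with respect to $\rho$-balls follows from the Nagel--Stein--Wainger volume estimate
\[
    \mu(B_\rho(x,r)) \;\asymp\; \sum_{|I|=d} |\det(X_I(x))|\, r^{d(I)},
\]
where $I = (i_1,\ldots,i_d)$ ranges over ordered $d$-tuples of indices in $\{1,\ldots,N\},$ $X_I(x)$ denotes the $d\times d$ matrix with columns $X_{i_1}(x),\ldots,X_{i_d}(x),$ and $d(I) = w_{i_1}+\cdots+w_{i_d}.$ Replacing $r$ by $2r$ multiplies each summand by at most $2^{dw_N},$ giving $\mu(B_\rho(x,2r)) \leq 2^{dw_N}\mu(B_\rho(x,r))$ uniformly in $x \in U$ and $r > 0.$ The volume estimate itself can be obtained by selecting, for each $x,$ a multi-index $I$ that maximizes $|\det(X_I(x))| r^{d(I)}$ and using it to parametrize a subball of $B_\rho(x,r)$ by a submersion whose Jacobian is controlled by this determinant.
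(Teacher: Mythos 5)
The paper does not prove this theorem: it is quoted verbatim from Nagel--Stein--Wainger and used as a black box, so there is no internal proof to compare against. Judged on its own terms, your sketch is a faithful roadmap of the NSW argument, but only the first third of it is actually a proof. The Euclidean comparability part is correct and complete in outline: the lower bound via $|s_j|\leq C|z|_V^{w_j}\leq C|z|_V$ for $|z|_V\leq 1$ together with a Gronwall-type flow estimate, and the upper bound via a uniform local right inverse of $\Lambda_x$ from the submersion property plus $|s_j|^{1/w_j}\leq |s_j|^{1/w_N}$ for $|s_j|\leq 1$, with uniformity over $x$ supplied by Assumption \ref{everything_is_connected}. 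The other two claims, however, are exactly the hard content of the cited theorem, and your proposal defers both to \cite[Section 4]{NagelSteinWainger1985}: the quasi-triangle inequality is reduced to a graded Baker--Campbell--Hausdorff expansion whose truncation and remainder control you explicitly leave to NSW (indeed NSW obtain it by proving the equivalence of $\rho_2$ with the piecewise-flow metric $\rho_1$, for which the triangle inequality is trivial, rather than by a direct BCH computation on $\rho_2$ alone), and doubling is derived from the NSW volume formula $\mu(B_\rho(x,r))\asymp \sum_{|I|=d}|\det(X_I(x))|\,r^{d(I)}$, which is Theorem 1 of that paper and is its deepest result; the one-sentence indication of how to prove it (maximizing $|\det(X_I(x))|r^{d(I)}$ over $I$) is the correct starting point but omits all of the work. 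Granting the volume formula, your doubling deduction is right, up to noting that the constant is $C^2 2^{dw_N}$ with $C$ the implied constant in $\asymp$, not $2^{dw_N}$ itself. In short: no errors, but the proposal proves the comparability estimate and cites the rest, which is essentially what the paper itself does.
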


We will need some estimates on integrals with respect to the measures $\mu^{x,y}.$
\begin{lemma}\label{disintegration_of_a_ball}
    Let $r>0,$ and
    \[
        F(x,y) := \int_{\Lambda_x^{-1}(y)} \chi_{B_V(0,r)}(z)\,d\mu^{x,y}(z).
    \]
    \begin{enumerate}
    \item{}\label{uniform_integrability_of_integral_of_ball} There exists a constant $C_U>0$ such that
    \[
        \int_U F(x,y)\,dy \leq C_Ur^{Q}
    \]
    and also
    \[
        \int_U F(x,y)\,dx\leq C_Ur^Q.
    \]
    \item{}\label{uniform_support_of_integral_of_ball} If $r<\rho(x,y),$ then $F(x,y)=0.$
    \end{enumerate}
    
\end{lemma}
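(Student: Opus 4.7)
My plan is to handle the two parts in the reverse order from how they are stated, treating part (\ref{uniform_support_of_integral_of_ball}) first since it is essentially definitional. If $r<\rho(x,y),$ then by Definition \ref{definition_of_the_metric} the value $r$ lies below the infimum $\rho(x,y),$ so $y\notin \Lambda_x(B_V(0,r)).$ Equivalently, $\Lambda_x^{-1}(y)\cap B_V(0,r)=\emptyset,$ and since the integrand defining $F(x,y)$ is supported on this intersection, $F(x,y)=0.$

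For the first inequality of part (\ref{uniform_integrability_of_integral_of_ball}), the plan is to feed the coarea formula \eqref{coarea_formula} with $g=\chi_{B_V(0,\min(r,\varepsilon))}$ and $f=\psi_n,$ where $\{\psi_n\}\subset C^\infty_c(U)$ is an increasing sequence converging pointwise to $\chi_U.$ Truncating the ball at radius $\varepsilon$ is harmless: the truncation is required so that $g$ is supported in $B_V(0,\varepsilon),$ but since $\Lambda_x^{-1}(y)\subset B_V(0,\varepsilon),$ it leaves $F(x,y)$ unchanged. Passing to the limit $n\to \infty$ by monotone convergence yields
\[
    \int_U F(x,y)\,dy \;\leq\; \int_{B_V(0,\min(r,\varepsilon))}dz \;=\; \min(r,\varepsilon)^Q\Vol(B_V(0,1)) \;\leq\; \Vol(B_V(0,1))\,r^Q,
\]
where the first equality uses the scaling $\det(\delta_t)=t^Q,$ which makes $B_V(0,s)$ have Lebesgue measure $s^Q\Vol(B_V(0,1)).$ This provides $C_U=\Vol(B_V(0,1)).$

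For the symmetric bound on $\int_U F(x,y)\,dx,$ I plan to show that $F(x,y)=F(y,x)$ and then reduce to the bound just proved. By Lemma \ref{symmetry_of_Lambda}, within $B_V(0,\varepsilon)$ one has $\Lambda_x^{-1}(y)=-\Lambda_y^{-1}(x)$ and $d\mu^{x,y}(z)=d\mu^{y,x}(-z).$ After substituting $w=-z$ in the integral defining $F(x,y),$ and observing that $\chi_{B_V(0,r)}(-w)=\chi_{B_V(0,r)}(w)$ because $|\cdot|_V$ is a norm, one obtains $F(x,y)=F(y,x).$ Consequently $\int_U F(x,y)\,dx=\int_U F(y,x)\,dx,$ and relabeling the dummy variable reduces this to the first inequality.

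I do not anticipate a substantive obstacle. The one technicality worth flagging is that the coarea formula as stated requires $f\in C^\infty_c(U),$ whereas we need it applied to $\chi_U;$ this is handled routinely by monotone convergence using the approximating sequence $\psi_n.$
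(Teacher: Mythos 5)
Your proof is correct and follows essentially the same route as the paper: part (2) is definitional, the first inequality of part (1) comes from the coarea formula, and the second follows from the symmetry $F(x,y)=F(y,x)$ supplied by Lemma \ref{symmetry_of_Lambda}. Your version is slightly cleaner in that truncating to $B_V(0,\min(r,\varepsilon))$ gives a single constant $\Vol(B_V(0,1))$ valid for all $r$, and you spell out the monotone-approximation step that the paper leaves implicit.
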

\begin{proof}
    If $r < \varepsilon,$ then \eqref{symmetry_of_Lambda}, implies that $F(x,y) = F(y,x),$ so
    \[
        \sup_{x\in U} \int_U F(x,y)\,dy = \sup_{y \in U} \int_U F(x,y)\,dx.
    \]
    It follows that for small $r,$ the two claims in \eqref{uniform_integrability_of_integral_of_ball} are equivalent. The same follows for general $r$ by increasing the constant $C_U.$

    By the coarea formula \eqref{coarea_formula}, if $r<\varepsilon_x,$ then
    \[
        \int_U F(x,y)\,dy = \int_{U} \chi_{B_V(0,r)}(z)\,dz \leq C_Vr^Q.
    \]
    By increasing $C_U$ if necessary, this still holds for $r\geq \varepsilon.$
    This proves \eqref{uniform_integrability_of_integral_of_ball}.

    For \eqref{uniform_support_of_integral_of_ball}, note that by definition if $r<\rho(x,y)$ we have $\Lambda_x^{-1}(y)\cap B_V(0,r)=\emptyset,$ and so $F(x,y)=0.$
\end{proof}

\subsection{Graded basis}
Let $U$ be as above. In particular, we continue to enforce Assumption \ref{everything_is_connected}.

Let $\Ubb$ be an open subset of $U\times V\times \Rl$ containing $U\times \{0\}\times \{0\}$ which is invariant under the $\Rl^{\times}$-action $\alpha$ defined by
\[
    \alpha_{\lambda}(x,z,\hbar) = (x,\delta_{\lambda}z,\lambda^{-1}\hbar),\quad (x,z,\hbar)\in \Ubb,\; \lambda>0.
\]
The $4$-tuple $(U,V,\sharp,\Ubb)$ is said to a graded basis for the filtration defined by $\{X_1,\ldots,X_N\},$ in the terminology of \cite[Definition 2.1]{AMY2022arxiv}.

\begin{definition}
Let $r$ denote the projection map 
\[
    r:\Ubb\to U\times \Rl,\quad r(x,z,\hbar) = (x,\hbar).
\]
A function $f$ on $\Ubb$ is said to be $r$-properly supported if $r$ is proper on the support of $k.$ That is, if for any compact subset $A$ of $U\times \Rl,$ $\mathrm{supp}(f)\cap r^{-1}(A)$ is compact.
\end{definition}

\begin{remark}
    If $\Ubb = U\times V\times \Rl,$ then saying that $f$ is $r$-properly supported is simply saying that it is compactly supported in the $V$-variable. For example, $f \in C^\infty(U\times V\times \Rl)$ is $r$-properly supported precisely when we can identify $f$ with a function
    \[
        f\in C^\infty(U\times \Rl,C^\infty_c(V)).
    \]
\end{remark}

\begin{definition}\label{kernel_parts_definition}
    Let $f \in C^\infty(\Ubb)$ be $r$-properly supported. 
    Assume that if $x\in U$ and $|\hbar|<\varepsilon,$ then $z\mapsto f(x,z,\hbar)$ is supported in $|z|_V<1.$

    For fixed $0<\hbar<\varepsilon,$ and $j\in \Ntrl,$ define
    \[
         K_j(x,y) := 2^{jQ}\hbar^{-Q}\int_{\Lambda_x^{-1}(y)} f(x,\delta_{\hbar}^{-1}\delta_{2^j}z,2^{-j}\hbar)\,d\mu^{x,y}(z),\quad x,y\in U.
    \]

    That is, $K_j(x,\cdot)$ is a function on $U$ such that for any $u \in C^\infty_c(U)$ we have
    \[
        \int_{U} K_j(x,y)u(y)\,dy = 2^{jQ}\hbar^{-Q}\int_{V} f(x,\delta_{\hbar}^{-1}\delta_{2^{j}}z,2^{-j}\hbar)u(\Lambda_x(z))\,dz.
    \]
    Let $\Kb = \{K_j\}_{j\in \Ntrl}.$
\end{definition}
Our goal is to show that $\Kb$ in Definition \ref{kernel_parts_definition} obeys all of the conditions of Definition \ref{kernelconditions}, with $(X,\rho,\mu) = (U,\rho,\mu)$ where $\mu$ is the Lebesgue measure. Conditions \eqref{kernelsupport}, \eqref{kernelmass} and \eqref{kernelcancellation} are easy consequences of the Lemma \ref{disintegration_of_a_ball}, while Condition \ref{kernelsmoothnessballvolume} will be more challenging.

\begin{remark}\label{mean_zero_remark}
An additional condition that we may place on $f$, besides Definition \ref{kernel_parts_definition}, is that there exists $C_f>0$ such that 
\[
    \left|\int_V f(x,z,\hbar)\,dz\right| \leq C_f\hbar.
\]
This is equivalent to
\[
    \int_V f(x,z,0)\,dz = 0.
\]
Indeed, since $f$ is smooth there exists an $r$-properly supported smooth function $g$ such that
\[
    f(x,z,\hbar) = f(x,z,0)+\hbar g(x,z,\hbar),\quad (x,z,\hbar)\in \Ubb
\]
and $\int_{V} g(x,z,\hbar)\,dz = O(1)$ as $\hbar\to 0.$
\end{remark}

\begin{lemma}\label{kernel_parts_satisfy_kernelsupport}
    Let $f$ and $K_j$ be as in Definition \ref{kernel_parts_definition}. There exists a constant $C_{U,f}>0$ such that if
    \[
        C_{U,f}\hbar 2^{-j}<\rho(x,y)
    \]
    then $K_{j}(x,y)=0.$ That is, $\Kb = \{K_j\}_{j\in \Ntrl}$ satisfies Condition \eqref{kernelsupport} of Definition \ref{kernelconditions} with constant $C_1(\Kb) = C_{U,f}\hbar.$
\end{lemma}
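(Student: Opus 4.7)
The plan is to read off the support condition directly from the integrand in the definition of $K_j$. The integral defining $K_j(x,y)$ is taken over $z\in\Lambda_x^{-1}(y)$, and its integrand contains the factor $f(x,\delta_{\hbar}^{-1}\delta_{2^j}z,2^{-j}\hbar)$. Since $0<\hbar<\varepsilon$ and $j\geq 0$, we have $0<2^{-j}\hbar<\varepsilon$, so the standing support hypothesis on $f$ (that $z'\mapsto f(x,z',h)$ is supported in $|z'|_V<1$ for $|h|<\varepsilon$) applies in the third slot. Hence the integrand vanishes unless $|\delta_{\hbar}^{-1}\delta_{2^j}z|_V<1$.

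Now translate this back to a condition on $z$. The family $\{\delta_t\}_{t>0}$ is a one-parameter group, so $\delta_{\hbar}^{-1}\delta_{2^j}=\delta_{2^j/\hbar}$, and by the assumed $\delta$-homogeneity of $|\cdot|_V$ we have $|\delta_{\hbar}^{-1}\delta_{2^j}z|_V=(2^j/\hbar)|z|_V$. Therefore the integrand is supported in $\{z:|z|_V<\hbar 2^{-j}\}$. For $K_j(x,y)$ to be nonzero we must then have $\Lambda_x^{-1}(y)\cap B_V(0,\hbar 2^{-j})\neq\emptyset$, i.e.\ $y\in\Lambda_x(B_V(0,\hbar 2^{-j}))$. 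By Definition \ref{definition_of_the_metric}, this forces $\rho(x,y)\leq \hbar 2^{-j}$. The contrapositive gives the claim with $C_{U,f}=1$ (any strictly larger constant would also do if one prefers strict inequalities).

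There is no genuine obstacle here; the lemma is essentially a bookkeeping statement about how the rescaling $z\mapsto\delta_{\hbar}^{-1}\delta_{2^j}z$ shrinks the unit support of $f$ in the $V$-variable to a ball of radius $\hbar 2^{-j}$. The only care needed is to verify that the third argument $2^{-j}\hbar$ stays within the range $(0,\varepsilon)$ on which the support hypothesis on $f$ is valid, and to compose the dilations correctly. The final identification of the support radius with $\rho(x,y)$ is then immediate from the definition of the Nagel--Stein--Wainger quasi-metric.
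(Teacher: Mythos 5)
Your proof is correct and follows essentially the same route as the paper: both arguments observe that the rescaled factor $f(x,\delta_{\hbar}^{-1}\delta_{2^j}z,2^{-j}\hbar)$ is supported in $|z|_V<\hbar 2^{-j}$, so that $K_j(x,y)$ vanishes unless $\Lambda_x^{-1}(y)$ meets $B_V(0,\hbar 2^{-j})$, which by the definition of $\rho$ forces $\rho(x,y)\leq \hbar 2^{-j}$. The only cosmetic difference is that the paper packages the last step as Lemma \ref{disintegration_of_a_ball}.\eqref{uniform_support_of_integral_of_ball}, whereas you unwind Definition \ref{definition_of_the_metric} directly; your dilation computation and the check that $2^{-j}\hbar<\varepsilon$ are both accurate.
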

\begin{proof}
    By assumption, we have
    \[
        |K_j(x,y)| \leq 2^{jQ}\hbar^{-Q}\|f\|_{\infty}\int_{\Lambda_x^{-1}(y)} \chi_{B_V(0,2^{-j}\hbar)}(z)\,d\mu^{x,y}(z).
    \]
    Hence, the result follows from Lemma \ref{disintegration_of_a_ball}.\eqref{uniform_support_of_integral_of_ball}.
\end{proof}

\begin{lemma}\label{kernel_parts_satisfy_kernelmass}
    Let $f$ and $K_j$ be as in Definition \ref{kernel_parts_definition}. There exists a constant $C_{U,f}>0$, independent of $\hbar$ such that
    \[
        \sup_{j\geq 0} \int_{U} |K_j(x,y)|\,dy \leq C_{U,f}
    \]
    That is, $\Kb = \{K_j\}_{j\in \Ntrl}$ satisfies Condition \eqref{kernelmass} of Definition \ref{kernelconditions}, with constant $C_2(\Kb) = C_{U,f}.$
\end{lemma}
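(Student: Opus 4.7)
The strategy is to reduce the double integral $\int_U |K_j(x,y)|\,dy$ back to a single integral over $V$ via the coarea formula \eqref{coarea_formula}, and then absorb the prefactor $2^{jQ}\hbar^{-Q}$ by a dilation change of variables. The whole argument is essentially scaling-invariance for the $L_1$-norm in $y$.

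First, I move the absolute value inside the defining integral for $K_j$ and apply Tonelli to get
\[
\int_U |K_j(x,y)|\,dy \le 2^{jQ}\hbar^{-Q}\int_U\int_{\Lambda_x^{-1}(y)} \bigl|f(x,\delta_\hbar^{-1}\delta_{2^j}z,2^{-j}\hbar)\bigr|\,d\mu^{x,y}(z)\,dy.
\]
Next, since $f$ has compact $z$-support, the inner integrand is compactly supported in $z$, so the coarea formula \eqref{coarea_formula} (applied with the $u(\Lambda_x(z))$-factor replaced by a cutoff that is $1$ on the relevant support) collapses the double integral to
\[
\int_V \bigl|f(x,\delta_\hbar^{-1}\delta_{2^j}z,2^{-j}\hbar)\bigr|\,dz.
\]

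Now I perform the change of variables $w=\delta_\hbar^{-1}\delta_{2^j}z$. Because $\det(\delta_t)=t^Q$, we have $dz=(2^{-j}\hbar)^Q\,dw$, which cancels the prefactor $2^{jQ}\hbar^{-Q}$ exactly. The integral simplifies to $\int_V |f(x,w,2^{-j}\hbar)|\,dw$. By the hypothesis in Definition \ref{kernel_parts_definition}, for $2^{-j}\hbar<\varepsilon$ the function $w\mapsto f(x,w,2^{-j}\hbar)$ is supported in $|w|_V<1$, and Assumption \ref{everything_is_connected} gives $\|f\|_\infty<\infty$ uniformly over $x\in U$. Hence the integral is bounded by $\|f\|_\infty\cdot\mathrm{Vol}(B_V(0,1))=:C_{U,f}$, independently of $j$ and $\hbar$.

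There is no real obstacle; the content is just that the rescaling $z\mapsto \delta_\hbar^{-1}\delta_{2^j}z$ of $V$ matches the measure-theoretic rescaling built into $K_j$. The only minor point of care is making sure the coarea formula applies with the implicit cutoff, which is automatic from the $z$-support of $f$ lying in $B_V(0,\varepsilon)$ (so within the range where $\Lambda_x$ is well-defined).
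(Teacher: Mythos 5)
Your proof is correct and takes essentially the same approach as the paper: the paper first bounds $|f|$ by $\|f\|_\infty$ times an indicator of a ball and then invokes Lemma \ref{disintegration_of_a_ball}.\eqref{uniform_integrability_of_integral_of_ball} (itself proved via the coarea formula), whereas you inline the coarea step and perform the dilation change of variables directly, but the underlying calculation --- coarea plus the exact cancellation of $2^{jQ}\hbar^{-Q}$ against the volume of $B_V(0,2^{-j}\hbar)$, plus a uniform sup bound on $f$ from Assumption \ref{everything_is_connected} --- is identical.
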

\begin{proof}
    Exactly as in Lemma \ref{kernel_parts_satisfy_kernelsupport}, we have
    \[
        |K_j(x,y)| \leq 2^{jQ}\hbar^{-Q}\|f\|_{\infty}\int_{\Lambda_x^{-1}(y)} \chi_{B_V(0,2^{-j}\hbar)}(z)\,d\mu^{x,y}(z).
    \]
    The result follows from Lemma \ref{disintegration_of_a_ball}.\eqref{uniform_integrability_of_integral_of_ball}.
\end{proof}

\begin{lemma}\label{kernelcancellation_verification}
    Let $f$ and $K_j$ be as in Definition \ref{kernel_parts_definition}. Assume in addition that
    \[
        \int_{V} f(x,z,0)\,dz = 0.
    \]
    There exists a constant $C_{U,f}>0,$ independent of $\hbar,$ such that
    \[
        \left|\int_{U} K_j(x,y)\,dy\right| \leq C_{U,f}2^{-j}\hbar.
    \]
    That is, $\Kb = \{K_j\}_{j\in \Ntrl}$ satisfies Condition \eqref{kernelcancellation} of Definition \ref{kernelconditions}, with constant $C_4(\Kb) = C_{U,f}\hbar.$
\end{lemma}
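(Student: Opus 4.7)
The plan is to reduce the integral $\int_U K_j(x,y)\,dy$ to an integral of $f(x,\cdot,2^{-j}\hbar)$ over $V$ via a homogeneous change of variables, and then invoke the mean-zero condition in Remark \ref{mean_zero_remark} together with Taylor expansion in $\hbar$.

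First, I would use the reformulation of $K_j$ given in Definition \ref{kernel_parts_definition}: for any $u\in C^\infty_c(U)$,
\[
    \int_U K_j(x,y)u(y)\,dy = 2^{jQ}\hbar^{-Q}\int_V f(x,\delta_\hbar^{-1}\delta_{2^j}z,2^{-j}\hbar)u(\Lambda_x(z))\,dz.
\]
By the hypothesis of Definition \ref{kernel_parts_definition}, the function $z\mapsto f(x,\delta_\hbar^{-1}\delta_{2^j}z,2^{-j}\hbar)$ is supported in $|z|_V<\hbar 2^{-j}$, hence in $B_V(0,\varepsilon)$ by Assumption \ref{everything_is_connected}. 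Consequently $K_j(x,\cdot)$ is compactly supported (Lemma \ref{kernel_parts_satisfy_kernelsupport}), and I may pick a bump function $u\in C^\infty_c(U)$ identically equal to $1$ on a neighbourhood of this support to represent $\int_U K_j(x,y)\,dy$ by the displayed identity with $u(\Lambda_x(z))$ replaced by $1$.

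Next I would perform the dilation change of variables $w=\delta_\hbar^{-1}\delta_{2^j}z$, noting that $dz=\hbar^Q 2^{-jQ}\,dw$ because $\det(\delta_t)=t^Q$. The prefactors $2^{jQ}\hbar^{-Q}$ cancel exactly, producing
\[
    \int_U K_j(x,y)\,dy = \int_V f(x,w,2^{-j}\hbar)\,dw.
\]
Now I invoke Remark \ref{mean_zero_remark}: since $\int_V f(x,w,0)\,dw=0$, smoothness of $f$ together with $r$-proper support yields a decomposition $f(x,w,\hbar')=f(x,w,0)+\hbar' g(x,w,\hbar')$ in which $\int_V g(x,w,\hbar')\,dw$ is uniformly bounded (in $x$ ranging over any compact set and $\hbar'$ in a bounded interval). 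Substituting $\hbar'=2^{-j}\hbar$ gives
\[
    \left|\int_V f(x,w,2^{-j}\hbar)\,dw\right| = 2^{-j}\hbar\,\left|\int_V g(x,w,2^{-j}\hbar)\,dw\right| \leq C_{U,f}\,2^{-j}\hbar,
\]
which is precisely the claimed bound with $C_4(\Kb)=C_{U,f}\hbar$.

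The only delicate point is the justification of replacing $u(\Lambda_x(z))$ by $1$ in the defining identity; the hard part (if any) is simply to keep track of supports so that the change of variables is carried out inside the region where $\Lambda_x$ is well-defined and where $u\equiv 1$. Everything else reduces to the dilation scaling and the Taylor-type decomposition in $\hbar$, both of which are elementary given the framework already established.
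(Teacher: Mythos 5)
Your proof is correct and follows essentially the same route as the paper: both reduce $\int_U K_j(x,y)\,dy$ to $\int_V f(x,w,2^{-j}\hbar)\,dw$ (the paper invokes the coarea formula \eqref{coarea_formula} directly, while you pass through the test-function representation of $K_j$, which is derived from that same formula) and then apply the $O(\hbar)$ estimate from Remark \ref{mean_zero_remark}. Your extra care about choosing a bump function $u\equiv 1$ on the relevant compact set is fine and amounts to what the coarea formula already implicitly handles.
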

\begin{proof}
    The condition 
    \[
        \int_{V} f(x,z,0)\,dz = 0
    \]
    implies that
    \[
        \int_V f(x,z,\hbar)\,dz = O(\hbar),\quad \hbar\to 0.
    \]
    (See Remark \ref{mean_zero_remark}).

    By the coarea formula \eqref{coarea_formula} and the assumption on $f,$ we have
    \begin{align*}
        \left|\int_U K_j(x,y)\,dy\right| &= \left|\int_{U} \int_V 2^{jQ}\hbar^{-Q}f(x,\delta_{\hbar}^{-1}\delta_{2^j}z,\hbar)\,d\mu^{x,y}(z)\,dy\right|\\
                            &= \left|\int_{V} f(x,z,2^{-j}\hbar)\,dz\right|\\
                            &\leq C_{U,f} 2^{-j}\hbar.
    \end{align*}
\end{proof}

\subsection{Level sets of submersions}
The final and most difficult condition to verify from Definition \ref{kernelconditions} is Condition \eqref{kernelsmoothnessballvolume}. In order to achieve this, we need to understand how the measure $\mu^{x,y}$ varies with $y\in U.$ This can be done using classical vector calculus.

To simplify the notation, in this subsection we consider a general submersion
\[
    \psi:V\to \Rl^d
\]
where recall that $V=\Rl^N,$ and $N\geq d$ and is equipped with a norm $|\cdot|_V.$ The fact that $V$ is equipped with a family of anisotropic dilations will not be important in this subsection. Ultimately we will consider $\psi = \Lambda_x,$ which is only defined on an open subset of $V,$ but for brevity we work here with $\psi$ defined on the entire space. Since our reasoning is local this is no loss of generality.
Given $y \in \Rl^d,$ let $\mu^y$ denote the measure on $\psi^{-1}(y)$ given by
\[
    d\mu^y(z) = \frac{d\Hc^{N-d}(z)}{|D\psi(z)|}.
\]
The coarea formula asserts that
\begin{equation}\label{simplified_coarea}
    \int_{\Rl^d} g(y)\int_{\psi^{-1}(y)} f(z)\,d\mu^y(z)\,dy = \int_{V} g(\psi(z))f(z)\,dz,\quad f \in C^\infty_c(V),\, g \in C^\infty_c(\Rl^d).
\end{equation}

In this subsection we will derive a formula for the difference
\[
    \int_{\psi^{-1}(y_1)} f(z)\,d\mu^{y_1}(z)-\int_{\psi^{-1}(y_0)} f(z)\,d\mu^{y_0}(z),\quad y_0,y_1\in \Rl^d,\; f \in C^\infty_c(V).
\]
The intent will be to analyse the difference $K_j(x,y)-K_j(x,y_0),$ where $K_j$ is as in Definition \ref{kernel_parts_definition}.

The idea is to define a flow in $V$ which transports $\psi^{-1}(y_0)$ to $\psi^{-1}(y_1).$
Let $W$ denote the vector field on $\Rl^N$ given by
\begin{equation}\label{W_definition}
    W(z) = D\psi(z)^{\top}(D\psi(z)D\psi(z)^{\top})^{-1}(y_1-y_0).
\end{equation}
Since $\psi$ is a submersion and $y_0\neq y_1,$ the vector field $W$ is nowhere vanishing. Let $\{\theta_t\}_{t\in \Rl}$ be the flow of this vector field. That is, $\theta_t(z) = \exp(tW)z,$ or more specifically
$t\mapsto \theta_t(z)$ solves the differential equation
\[
    \frac{d}{dt}\theta_t(z) = D\psi(\theta_t(z))^{\top}(D\psi(\theta_t(z))D\psi(\theta_t(z))^{\top})^{-1}(y_1-y_0),\quad \theta_0(z) = z.
\]
In principle, the flow $t\mapsto \theta_t(z)$ may only exist for a time $|t|<\varepsilon_z.$

If it exists, the map $\theta_1$ transports $\psi^{-1}(y_0)$ to $\psi^{-1}(y_1).$ One way to see this is to define
\[
    y_t := (1-t)y_0+ty_1,\quad 0\leq t\leq 1.
\]
The chain rule gives
\begin{align*}
    \frac{d}{dt}(\psi(\theta_t(z))-y_t) &= D\psi(\theta_t(z))\frac{d}{dt}\theta_t(z)-\frac{d}{dt}y_t\\
    &= (D\psi(\theta_t(z))D\psi(\theta_t(z))^{\top}(D\psi(\theta_t(z))D\psi(\theta_t(z))^{\top})^{-1}-1)(y_1-y_0)\\
    &= 0.
\end{align*}
Hence, if $\psi(z)=y_0,$ then $\psi(\theta_t(z)) = y_t$ for all $0\leq t\leq 1.$
It follows that $\theta_t$ defines a smooth map from $\psi^{-1}(y_0)$ to $\psi^{-1}(y_t).$ In particular, at $t=1$ we have a smooth map
\[
    \theta_1:\psi^{-1}(y_0)\to \psi^{-1}(y_1).
\]
Similar reasoning shows that $\theta_{-1}$ is a smooth inverse for $\theta_1,$ so $\theta_1$ is a diffeomorphism.

The next step is to determine how $\mu^{y}$ changes with $\theta_t.$ Again let $y_t := y_0+t(y_1-y_0).$ We have
\begin{equation}\label{radon_nykodym_formula}
    \frac{d(\theta_t^*\mu^{y_t})}{d\mu^{y_0}} = |D\theta_t|.
\end{equation}
Here, $\theta_t^*\mu^{y_t} = \mu^{y_t}\circ \theta_t$ is the pullback of $\mu^{y_t}$ to $\psi^{-1}(y)$ and $|\cdot|$ is the determinant.
To see \eqref{radon_nykodym_formula}, let $g \in C^\infty_c(\Rl^d)$ and $f\in C^\infty_c(\Rl^N).$ By \eqref{simplified_coarea}, we have
\begin{align*}
    \int_{\Rl^d} g(y)\int_{\psi^{-1}(y_0)} f(z)\,d(\theta_t^*\mu^{y_t})(z)\,dy_0 &= \int_{\Rl^d} g(y)\int_{\psi^{-1}(y_t)} f(\theta_t^{-1}z)\,d\mu^{y_t}(z)\,dy\\
                                                                            &= \int_{\Rl^d} g(y_t-t(y_1-y_0))\int_{\psi^{-1}(y_t)}f(\theta_t^{-1}z)\,d\mu^{y_t}(z)\,dy_t\\
                                                                            &= \int_{\Rl^d} g(\psi(z)-t(y_1-y_0))f(\theta_t^{-1}z)\,dz.
\end{align*}
By a change of variables
\begin{align*}
    \int_{\Rl^d} g(y_0)\int_{\psi^{-1}(y_0)} f(z)\,d(\theta_t^*\mu^{y_t})(z)\,dx &= \int_{\Rl^d} g(\psi(\theta_t(z))-t(y_1-y_0))f(z)|D\theta_t(z)|\,dz\\
                                                                            &= \int_{\Rl^d} g(\psi(z))f(z)|D\theta_t(z)|\,dz.
\end{align*}
Applying \eqref{simplified_coarea} again gives
\[
    \int_{\Rl^d} g(y_0)\int_{\psi^{-1}(y_0)} f(z)\,d(\theta_t^*\mu^{y_t})(z)\,dy_0 = \int_{\Rl^d} g(y_0)\int_{\psi^{-1}(y_0)} f(z)|D\theta_t(z)|\,d\mu^{y_0}(z)\,dy.
\]
Since $g$ and $f$ are arbitrary, \eqref{radon_nykodym_formula} follows.

Using \eqref{radon_nykodym_formula}, we have
\[
    \int_{\psi^{-1}(y_1)} f(z)\,d\mu^{y_t}(z) = \int_{\psi^{-1}(y_0)} f(\theta_1(z))|D\theta_1(z)|\,d\mu^{y_0}(z).
\]
Therefore
\begin{align*}
    \int_{\psi^{-1}(y_0)}f(z)\,d\mu^{y_0}(z)-\int_{\psi^{-1}(y_1)}f(z)\,d\mu^{y_1}(z) &= \int_{\psi^{-1}(y_0)} f(z)-f(\theta_1(z))|D\theta_1(z)|\,d\mu^{y_0}(z)\\
                    &= \int_{\psi^{-1}(y_1)} f(z)-f(\theta_1(z))+f(\theta_1(z))(1-|D\theta_1(z)|)\,d\mu^{y_0}(z).
\end{align*}

Recall that $\theta_t = \exp(tW),$ where $W$ is the vector field \eqref{W_definition}. The Liouville-Ostrogradsky formula\footnote{This formula appears to be folklore. A proof is sketched as an exercise on Page 86 of \cite{Perko2001}.} says that Jacobian determinant of $\theta_t$ obeys the differential equation
\[
    \frac{d}{dt}|D\theta_t(z)| = \mathrm{div}(W)(\theta_t(z))|D\theta_t(z)|.
\]
Since $\theta_0(z)=z,$ we have $|D\theta_0(z)|=1$ and hence
\[
    |D\theta_1(z)| = \exp(\int_0^1 \mathrm{div}(W(\theta_t(z)))\,ds).
\]
Thus,
\begin{align}\label{moving_surfaces_difference_formula}
    &\int_{\psi^{-1}(y_0)}f(z)\,d\mu^{y_0}(z)-\int_{\psi^{-1}(y_1)}f(z)\,d\mu^{y_1}(z) \nonumber\\
    &= \int_{\psi^{-1}(y_0)} f(z)-f(\theta_1(z))\,d\mu^{y_0}(z) + \int_{\psi^{-1}(y_0)} f(\theta_1(z))(1-\exp(\int_0^1 \mathrm{div}(W)(\theta_t(z))\,dt))\,d\mu^{y_0}(z).
\end{align}

With this preparation, we can prove the main result of this subsection.
\begin{proposition}\label{disintegration_is_wasserstein_lipschitz}
    Let $\psi:V\to \Rl^d$ be a submersion, let $R>0.$
    There exists $\delta_{\psi,R}>0$ and $C_{\psi,R}>0$ such that for all $0<r<R$, $|y_0-y_1|\leq \delta_{\psi,R}r$
    and $f \in C^\infty_c(B_V(0,r))$ we have
    \begin{align*}
        &\left|\int_{\psi^{-1}(y_0)}f(z)\,d\mu^{y_0}(z)-\int_{\psi^{-1}(y_1)}f(z)\,d\mu^{y_1}(z) \right|\\
        &\leq C_{\psi,R}(\|f\|_{\mathrm{Lip}(V)}+\|f\|_{L_{\infty}(V)})|y_1-y_0|\int_{\psi^{-1}(y_0)} \chi_{B_V(0,2r)}(z)\,d\mu^{y_0}(z).
    \end{align*}
\end{proposition}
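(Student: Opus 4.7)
The plan is to work directly from the formula \eqref{moving_surfaces_difference_formula} already derived, bounding each of the two summands pointwise by a constant multiple of $|y_1-y_0|$ and keeping careful track of the supports so that the final domain of integration reduces to $\psi^{-1}(y_0)\cap B_V(0,2r)$.

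First, I will obtain uniform control of the flow vector field $W$ from \eqref{W_definition}. Since $f$ is supported in $B_V(0,r)\subset B_V(0,R)$, it suffices to analyse $W$ on a fixed compact enlargement $\overline{B_V(0,3R)}$ of this ball. On this compact set $\psi$ is a submersion, so $D\psi\,D\psi^{\top}$ is uniformly positive-definite and the matrix $D\psi^{\top}(D\psi\,D\psi^{\top})^{-1}$ has smooth, uniformly bounded entries together with uniformly bounded first derivatives. Hence there exists a constant $C_{\psi,R}>0$ (depending only on $\psi$ and $R$) with
\[
    \sup_{z\in \overline{B_V(0,3R)}} |W(z)| + \sup_{z\in \overline{B_V(0,3R)}}|\mathrm{div}(W)(z)| \leq C_{\psi,R}|y_1-y_0|,
\]
since $W$ is linear in $(y_1-y_0)$ with smooth coefficients.

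Second, I will choose $\delta_{\psi,R}>0$ small enough that whenever $|y_1-y_0|\leq \delta_{\psi,R}r$ with $r<R$, the integral curves $t\mapsto \theta_t(z)$ exist on $[0,1]$ for every $z\in B_V(0,2r)$, remain inside $B_V(0,3r)\subset B_V(0,3R)$, and satisfy $|z-\theta_1(z)|_V\leq r$. This is possible because $|\theta_t(z)-z|_V\leq \int_0^1 |W(\theta_s(z))|_V\,ds\leq C_{\psi,R}|y_1-y_0|$, and this last quantity can be made $\leq r$ by shrinking $\delta_{\psi,R}$.

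Third, I will estimate the two summands in \eqref{moving_surfaces_difference_formula}. For the first, the Lipschitz bound gives
\[
    |f(z)-f(\theta_1(z))|\leq \|f\|_{\mathrm{Lip}(V)}|z-\theta_1(z)|_V\leq C_{\psi,R}\|f\|_{\mathrm{Lip}(V)}|y_1-y_0|,
\]
and the integrand vanishes unless at least one of $z,\theta_1(z)$ lies in $B_V(0,r)$; by step two and the triangle inequality for $|\cdot|_V$, in either case $z\in B_V(0,2r)$. For the second summand, the Liouville--Ostrogradsky identity gives
\[
    |D\theta_1(z)| = \exp\!\left(\int_0^1 \mathrm{div}(W)(\theta_s(z))\,ds\right),
\]
so the elementary bound $|1-e^{u}|\leq |u|e^{|u|}$ together with step one yields $\bigl||D\theta_1(z)|-1\bigr|\leq C_{\psi,R}|y_1-y_0|$ (possibly enlarging the constant and shrinking $\delta_{\psi,R}$ to keep the exponent bounded). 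Hence $|f(\theta_1(z))(1-|D\theta_1(z)|)|\leq C_{\psi,R}\|f\|_{L_\infty(V)}|y_1-y_0|$, and again the integrand is supported where $\theta_1(z)\in B_V(0,r)$, forcing $z\in B_V(0,2r)$.

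Finally, inserting these two pointwise estimates into \eqref{moving_surfaces_difference_formula} and factoring out $|y_1-y_0|$ and $(\|f\|_{\mathrm{Lip}(V)}+\|f\|_{L_\infty(V)})$, the remaining integral is majorised by $\int_{\psi^{-1}(y_0)}\chi_{B_V(0,2r)}(z)\,d\mu^{y_0}(z)$, giving the claimed inequality. The main (minor) obstacle is the careful support tracking in the third step, which is where the choice of $\delta_{\psi,R}$ is genuinely used; the analytic content is otherwise a routine application of the bounds from step one to the explicit formula already established.
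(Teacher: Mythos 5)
Your proposal is correct and follows essentially the same route as the paper's own proof: both control $W$ and $\mathrm{div}(W)$ uniformly on a fixed compact enlargement of $B_V(0,R)$ using linearity in $y_1-y_0$, invoke Picard--Lindel\"of to run the flow for time $1$, apply \eqref{moving_surfaces_difference_formula} with the Lipschitz bound and the Liouville--Ostrogradsky identity plus $|1-e^u|\leq |u|e^{|u|}$, and track supports into $B_V(0,2r)$. The only differences are cosmetic (radius $3R$ versus $2R$ for the enlargement, and minor bookkeeping of constants).
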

\begin{proof}
    Let $W$ be the vector field in \eqref{W_definition}.
    Since $z\mapsto W(z)$ is smooth and depends linearly on $y_0-y_1,$ for every $K\geq 0$ and and $L>0$ there is a constant $C_{\psi,L,K}$ such that
    \[
        \sup_{|\alpha|\leq K} \sup_{|z|_V<L} |\partial_z^{\alpha}W(z)|_V \leq C_{\psi,L,K}|y_0-y_1|.
    \]
    Let $|z|_V<r<R.$ By the Picard-Lindel\"of theorem, if
    \[
      0<C_{\psi,2R,1}(1+R)\delta_{\psi,R} < 1
    \]
    then for $|y_0-y_1|<r\delta_{\psi,R},$ the flow $\{\theta_t(z)\}_{-1\leq t\leq 1}$ exists and moreover
    \[
        \sup_{-1\leq t\leq 1} |\theta_t(z)|_V \leq 2R.
    \]
    This implies that
    \[
        \sup_{0\leq t\leq 1} |W(\theta_t(z))|_V \leq C_{\psi,2R,1}|y_0-y_1|.
    \]
    
    Similarly,
    \begin{equation}\label{deformation_estimate}
        \sup_{0\leq t\leq 1} |\mathrm{div}(W)(\theta_t(z))| \leq C_{\psi,2R,1}|y_0-y_1|.
    \end{equation}

    Since
    \[
        \theta_1(z)-z = \int_0^1 W(\theta_t(z))\,dt
    \]
    we have
    \begin{equation}\label{distance_estimate}
        |\theta_1(z)-z|_V\leq \sup_{0\leq t\leq 1} |W(\theta_t(z))|_V \leq C_{\psi,2R,1}|y_0-y_1|.
    \end{equation}
    Since $|y_0-y_1|<r\delta_{\psi,R},$ this implies
    \[
        |\theta_1(z)-z|_V \leq C_{\psi,2R,1}\delta_{\psi,R}r < \frac{r}{1+R}<r.
    \]
    Therefore,
    \[
        |\theta_1(z)|_V < 2r.
    \]
    Similarly, $|\theta_{-1}(z)|_V < 2r.$
    
    Let $f \in C^\infty_c(B_V(0,r)).$ Note that $f\circ \theta_1$ is supported in $B_V(0,2r),$ since $|\theta_{-1}(z)|_V < 2r.$ 
    Now we apply \eqref{moving_surfaces_difference_formula}, resulting in
    \begin{align*}
        &\left|\int_{\psi^{-1}(y_0)}f(z)\,d\mu^{y_0}(z)-\int_{\psi^{-1}(y_1)}f(z)\,d\mu^{y_1}(z) \right| \leq \int_{\psi^{-1}(y_0)} |f(z)-f(\theta_1(z))|\,d\mu^{y_0}(z)\\
        &\quad + \int_{\psi^{-1}(y_0)} |f(\theta_1(z))|(1-\exp(\int_0^1 \mathrm{div}(W)(\theta_t(z))\,dt))\,d\mu^{y_0}(z)\\
        &\leq \|f\|_{\mathrm{Lip}(V)}\int_{\psi^{-1}(y_0)} \chi_{B_V(0,2r)}(z)|\theta_1(z)-z|_V\,d\mu^{y_0}(z)\\
        &\quad + \|f\|_{L_{\infty}(V)}\int_{\psi^{-1}(y_0)} \chi_{B_V(0,2r)}(z)|1-\exp(\int_0^1 \mathrm{div}(W)(\theta_t(z)))|\,d\mu^{y_0}(z).
    \end{align*}
    By \eqref{distance_estimate}, \eqref{deformation_estimate}
    and the numerical inequality
    \[
        |1-\exp(x)| \leq |x|\exp(|x|),\quad x \in \Rl
    \]
    we have
    \begin{align*}
         &\left|\int_{\psi^{-1}(y_0)}f(z)\,d\mu^{y_0}(z)-\int_{\psi^{-1}(y_1)}f(z)\,d\mu^{y_1}(z) \right|\\
         &\leq C_{\psi,2R,1}\|f\|_{\mathrm{Lip}(V)}|y_0-y_1|\int_{\Lambda_x^{-1}(y_0)} \chi_{B_V(0,2r)}(z)\,d\mu^{y_0}(z)\\
         &\quad +C_{\psi,2R,1}\|f\|_{L_{\infty}(V)}|y_0-y_1|\exp(C_{\psi,2R,1}|y_0-y_1|)\int_{\psi^{-1}(y_0)} \chi_{B(0,2r)}(z)\,d\mu^{y_0}(z).
    \end{align*}
    Since $|y_0-y_1|\leq \delta_{\psi,R} < \frac{1}{C_{\psi,2R,1}},$ we conclude the result with constant $C_{\psi,R} = (1+e)C_{\psi,2R,1}.$
\end{proof}

\subsection{The proof of \eqref{kernelsmoothnessballvolume} for $\Kb$}
\begin{theorem}\label{hard_part}
    Let $\Kb = \{K_j\}_{j\in \Ntrl}$ be the sequence in Definition \ref{kernel_parts_definition}. There exist constants $M_{U}>0$ and $C_{U,f}>0$ such that if $\rho(y,y_0)\leq M_{U}2^{-j},$ then
    \[
        |K_j(x,y)-K_j(x,y_0)| \leq 2^{j}\rho(y,y_0)I_j(x,y),\quad x,y,y_0\in U,\; j\in \Ntrl
    \]
    where
    \[
        I_j(x,y) = C_{U,f}\hbar^{-1}2^{jQ}\hbar^{-Q}\int_{\Lambda_x^{-1}(y)} \chi_{B_V(0,2^{1-j}\hbar)}(z)\,d\mu^{x,y}(z).
    \]
\end{theorem}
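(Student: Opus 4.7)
The plan is to rescale $f$ so that Proposition~\ref{disintegration_is_wasserstein_lipschitz} applies to the submersion $\psi = \Lambda_x$, and then convert the resulting estimate back to $K_j$. Set $g_x(z) := f(x, \delta_\hbar^{-1}\delta_{2^j}z, 2^{-j}\hbar)$. Since $|\delta_\hbar^{-1}\delta_{2^j}z|_V = (2^j/\hbar)|z|_V$ by homogeneity of $|\cdot|_V$, the support hypothesis on $f$ places $g_x$ inside $B_V(0, 2^{-j}\hbar)$. Smoothness of $f$ together with Assumption~\ref{everything_is_connected} (compact closure of $U$) produces a constant $C_f$, independent of $x$, $\hbar$, and $j$, such that $\|g_x\|_{L_\infty(V)} \leq C_f$ and $\|g_x\|_{\mathrm{Lip}(V)} \leq C_f\,(2^j/\hbar)$; the latter uses the dilation homogeneity of $|\cdot|_V$. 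By the defining identity for $K_j$,
\[
K_j(x,y) - K_j(x,y_0) = 2^{jQ}\hbar^{-Q}\Bigl(\int_{\Lambda_x^{-1}(y)} g_x\, d\mu^{x,y} - \int_{\Lambda_x^{-1}(y_0)} g_x\, d\mu^{x,y_0}\Bigr).
\]

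Next, apply Proposition~\ref{disintegration_is_wasserstein_lipschitz} with $\psi = \Lambda_x$, fixed $R = \varepsilon/3$, and $r = 2^{-j}\hbar$, \emph{swapping the roles of $y_0$ and $y_1$} so the right-hand side integrates over $\Lambda_x^{-1}(y)$ rather than $\Lambda_x^{-1}(y_0)$. Although $\Lambda_x$ is only defined on $B_V(0,\varepsilon)$, the proof of the proposition uses $\psi$ only on $B_V(0,2R) \subset B_V(0,\varepsilon)$, so the hypothesis that $\psi$ be defined globally is harmless (extend arbitrarily outside, or repeat the local argument). Smooth dependence of $\Lambda_x$ on $x$ together with compactness of $\overline U$ allows us to choose $\delta_{\Lambda_x, R}$ and $C_{\Lambda_x, R}$ uniformly in $x \in U$; call them $\delta_U$ and $C_U$.

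The proposition's hypothesis becomes $|y-y_0|_{\mathrm{Eucl}} \leq \delta_U \cdot 2^{-j}\hbar$. The Nagel--Stein--Wainger inequality \eqref{NSW_distance_upper_bound} gives $|y-y_0|_{\mathrm{Eucl}} \leq c^{-1}\rho(y,y_0)$, so setting $M_U := c\delta_U\hbar$ converts the theorem's hypothesis $\rho(y,y_0) \leq M_U 2^{-j}$ into the proposition's hypothesis. Since $\hbar < \varepsilon$ is uniformly bounded and $2^j \geq 1$, we have $2^j/\hbar \geq 1/\varepsilon$, so $\|g_x\|_{L_\infty}$ absorbs into the Lipschitz term up to a multiplicative constant, giving $\|g_x\|_{L_\infty} + \|g_x\|_{\mathrm{Lip}(V)} \leq C_f'\,(2^j/\hbar)$. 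Multiplying the proposition's conclusion by $2^{jQ}\hbar^{-Q}$ and using $|y-y_0|_{\mathrm{Eucl}} \leq c^{-1}\rho(y,y_0)$ yields
\[
|K_j(x,y) - K_j(x,y_0)| \leq 2^{jQ}\hbar^{-Q}\cdot C_U C_f'\, \frac{2^j}{\hbar}\cdot c^{-1}\rho(y,y_0)\int_{\Lambda_x^{-1}(y)} \chi_{B_V(0, 2^{1-j}\hbar)}\,d\mu^{x,y},
\]
which rearranges precisely as $2^j\rho(y,y_0)I_j(x,y)$ with $C_{U,f} := C_U C_f' c^{-1}$.

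The two points that will need the most care are (i) justifying the claimed scaling $\|g_x\|_{\mathrm{Lip}(V)} \leq C_f\,(2^j/\hbar)$ with respect to the homogeneous, non-Euclidean norm $|\cdot|_V$, which requires relating $|\cdot|_V$ to a standard norm on the compact set on which $f$ has bounded derivatives, and (ii) extracting the uniformity of $\delta_{\Lambda_x, R}$ and $C_{\Lambda_x, R}$ in $x \in U$, which amounts to tracing the proof of Proposition~\ref{disintegration_is_wasserstein_lipschitz} and noting that the derivatives of $\Lambda_x(z)$ up to some fixed order are uniformly bounded on $\overline U \times \overline{B_V(0, 2R)}$ by Assumption~\ref{everything_is_connected}.
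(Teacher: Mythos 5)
Your proposal is correct and follows essentially the same route as the paper: rescale $f$ to $f_j(z)=f(x,\delta_{2^j}\delta_\hbar^{-1}z,2^{-j}\hbar)$ with $\|f_j\|_{L_\infty}+\|f_j\|_{\mathrm{Lip}(V)}\lesssim 2^j\hbar^{-1}$, apply Proposition \ref{disintegration_is_wasserstein_lipschitz} with $\psi=\Lambda_x$ and $r=2^{-j}\hbar$, use Assumption \ref{everything_is_connected} to make $\delta_{\Lambda_x,R}$ and $C_{\Lambda_x,R}$ uniform in $x$, and convert $|y-y_0|$ to $\rho(y,y_0)$ via \eqref{NSW_distance_upper_bound}. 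The two caveats you flag (the $y_0\leftrightarrow y_1$ swap and the $\hbar$-dependence of $M_U$, which the paper absorbs into $M(\Kb)=M_U\hbar$ in Corollary \ref{all_the_conditions_hold}) are handled consistently with the paper's own treatment.
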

\begin{proof}
    For brevity, let
    \[
        f_j(z) := f(x,\delta_{2^j}\delta_{\hbar}^{-1}z,2^{-j}\hbar).
    \]
    Recall that $f_j$ is supported in $B_V(0,2^{-j}\hbar),$ and since $j\geq 0$ and $\hbar<1,$ there is a constant $C_f$ such that
    \[
        \|f_j\|_{\mathrm{Lip}(V)}+\|f_j\|_{L_{\infty}(V)} \leq C_f2^{j}\hbar^{-1}.
    \]    
    By the definition of $K_j,$
    \[
        K_j(x,y) = 2^{jQ}\hbar^{-Q}\int_{\Lambda_x^{-1}(y)} f_j(z)\,d\mu^{x,y}(z),\quad x,y\in U.
    \]
    Now we apply Proposition \ref{disintegration_is_wasserstein_lipschitz}
    with $\psi = \Lambda_x,$ $f=f_j$ and $R=1$ and $r=2^{-j}\hbar.$ 
    If $\delta = \delta_{\Lambda_x,1}$ is chosen as in Proposition \ref{disintegration_is_wasserstein_lipschitz}, then for $|y-y_0|<\delta 2^{-j}\hbar,$ we have
    \begin{equation*}
        |K_j(x,y)-K_j(x,y_0)|\leq C_{f}C_{\Lambda_x,1}2^j|y-y_0|\cdot 2^{jQ}\hbar^{-Q}\int_{\Lambda_x^{-1}(y)}\chi_{B_V(0,2^{1-j}\hbar)}(z)\,d\mu^{x,y}(z).
    \end{equation*}
    Since $\Lambda_x$ depends smoothly on $x,$ and $U$ has been chosen as in Assumption \ref{everything_is_connected}, the constants $C_{\Lambda_x,1}$ and $\delta_{\Lambda_x,1}$ can be chosen uniform over $x \in U.$ 
    By  \eqref{NSW_distance_upper_bound}, we can replace the Euclidean distance $|y-y_0|$ by $\rho(y,y_0),$ 
    Hence, for $|y-y_0|<\delta 2^{-j}\hbar,$ we have
    \[
        |K_j(x,y)-K_j(x,y_0)|\leq 2^j\rho(y,y_0)I_j(x,y).
    \]
    By \eqref{NSW_distance_upper_bound}, this still holds if $\rho(y,y_0)<M_{U}2^{-j}\hbar$ for sufficiently small $M_{U},$ and this completes the proof.

\end{proof}

\begin{corollary}\label{all_the_conditions_hold}
    Let $\Kb = \{K_j\}_{j\in \Ntrl}$ be the sequence in Definition \ref{kernel_parts_definition}.
    There exists constants $C_{U,f},M_{U}>0$ such that $\Kb$ satisfies Condition \eqref{kernelsmoothnessballvolume} of Definition \ref{kernelconditions} with constants
    $M(\Kb) = M_{U}\hbar$ and $C_{3}(\Kb) = C_{U,f}\hbar^{-1}.$
\end{corollary}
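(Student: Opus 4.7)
The corollary is essentially a bookkeeping step that assembles the pointwise bound of Theorem \ref{hard_part} with the mass estimate of Lemma \ref{disintegration_of_a_ball}; no new geometric input is required. The plan proceeds in three short steps.

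First, Theorem \ref{hard_part} already supplies the pointwise inequality $|K_j(x,y)-K_j(x,y_0)|\leq 2^j\rho(y,y_0)I_j(x,y)$ in the regime $\rho(y,y_0)\leq M_U 2^{-j}\hbar$, with the explicit $I_j$ displayed in its statement. This immediately identifies $M(\Kb)=M_U\hbar$ as the smoothness scale, so no further work is needed for that parameter.

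Second, to verify the mass condition on $I_j$, I factor
\[
    I_j(x,y)=C_{U,f}\hbar^{-1}\,2^{jQ}\hbar^{-Q}\,F_j(x,y),\qquad F_j(x,y):=\int_{\Lambda_x^{-1}(y)}\chi_{B_V(0,2^{1-j}\hbar)}(z)\,d\mu^{x,y}(z).
\]
The two integrability statements required in Condition \eqref{kernelsmoothnessballvolume} then reduce to uniformly bounding $\int_U F_j(x,y)\,dy$ (in $x$) and $\int_U F_j(x,y)\,dx$ (in $y$). This is exactly what Lemma \ref{disintegration_of_a_ball}.\eqref{uniform_integrability_of_integral_of_ball} provides, with radius $r=2^{1-j}\hbar$: both integrals are bounded by $C_U(2^{1-j}\hbar)^Q = 2^Q C_U\,2^{-jQ}\hbar^Q$.

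Third, substituting this back into the definition of $I_j$ cancels the factors $2^{jQ}\hbar^Q$ and leaves
\[
    \sup_{x\in U}\int_U I_j(x,y)\,dy,\ \sup_{y\in U}\int_U I_j(x,y)\,dx\ \leq\ 2^Q C_U C_{U,f}\,\hbar^{-1},
\]
so we may take $C_3(\Kb)=2^Q C_U C_{U,f}\hbar^{-1}$. There is no genuine obstacle: all the analytic content sits in Theorem \ref{hard_part} (pointwise Lipschitz control of $K_j$) and in Lemma \ref{disintegration_of_a_ball} (the $r^Q$ volume estimate for balls disintegrated along the fibres of $\Lambda_x$), and the corollary simply bundles the exponents in $2^j$ and $\hbar$ correctly.
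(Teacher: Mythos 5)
Your proposal is correct and follows exactly the paper's own (very short) proof: the pointwise bound and the scale $M(\Kb)=M_U\hbar$ come from Theorem \ref{hard_part}, and the integrability of $I_j$ follows by applying Lemma \ref{disintegration_of_a_ball}\eqref{uniform_integrability_of_integral_of_ball} with $r=2^{1-j}\hbar$, whose $r^Q$ bound cancels the prefactor $2^{jQ}\hbar^{-Q}$ and leaves $C_3(\Kb)\sim\hbar^{-1}$. The only discrepancy you implicitly smooth over is that the statement of Theorem \ref{hard_part} writes the hypothesis as $\rho(y,y_0)\leq M_U2^{-j}$ while its proof (and the corollary) use $\rho(y,y_0)\leq M_U2^{-j}\hbar$; your reading agrees with the proof and with the claimed $M(\Kb)=M_U\hbar$.
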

\begin{proof}
    By Lemma \ref{disintegration_of_a_ball}, there exists a constant $C_U$ such that function $I_j$ defined in Theorem \ref{hard_part} satisfies
    \[
        \sup_x \int_{U} I_j(x,y)\,dy,\, \sup_y \int_U I_j(x,y)\,dx \leq C_UC_{U,f}\hbar^{-1}.
    \]
    This verifies \eqref{kernelsmoothnessballvolume}.
\end{proof}

The results of this section are summarised in the following theorem:
\begin{theorem}
    Let $f$ and $\Kb = \{K_j\}_{j\in \Ntrl}$ be as in Definition \ref{kernel_parts_definition}. The sequence $\Kb$ obeys Conditions \eqref{kernelsupport}, \eqref{kernelmass}, \eqref{kernelsmoothnessballvolume} of Definition \ref{kernelconditions}, with constants
    \[
        C_1(\Kb) = C_{U,f}\hbar,\quad C_2(\Kb) = C_{U,f},\quad C_3(\Kb) = C_{U,f}\hbar^{-1}
    \]
    and $M(\Kb) =C_{U,f}\hbar$ as $\hbar\to 0.$
    
    If
    \[
        \int_V f(x,z,0)\,dz = 0
    \]
    then $\Kb$ satisfies Condition \eqref{kernelcancellation}
    with constant $C_4(\Kb) = C_{U,f}\hbar$ as $\hbar\to 0.$
\end{theorem}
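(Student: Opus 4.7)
The proof I would give is essentially bookkeeping: each of the four required conditions has been established as a separate lemma earlier in the section, so I would assemble them and verify that the stated $\hbar$-dependence of the constants is correct.

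First I would invoke Lemma \ref{kernel_parts_satisfy_kernelsupport}, which yields Condition \eqref{kernelsupport} with $C_1(\Kb)=C_{U,f}\hbar$. The $\hbar$ enters because after rescaling by $\delta_{\hbar}^{-1}\delta_{2^j}$, the support of $z\mapsto f(x,\delta_{\hbar}^{-1}\delta_{2^j}z,2^{-j}\hbar)$ sits inside $|z|_V<2^{-j}\hbar$, which together with Lemma \ref{disintegration_of_a_ball}\eqref{uniform_support_of_integral_of_ball} forces $\rho(x,y)\lesssim 2^{-j}\hbar$ on $\mathrm{supp}(K_j)$. For Condition \eqref{kernelmass}, Lemma \ref{kernel_parts_satisfy_kernelmass} gives $C_2(\Kb)=C_{U,f}$ independent of $\hbar$, because the $\hbar^{-Q}$ prefactor in $K_j$ is exactly cancelled by the $Q$-homogeneous volume estimate in Lemma \ref{disintegration_of_a_ball}\eqref{uniform_integrability_of_integral_of_ball}.

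For Condition \eqref{kernelsmoothnessballvolume}, I would simply cite Corollary \ref{all_the_conditions_hold}, which already records the constants $M(\Kb)=M_U\hbar$ and $C_3(\Kb)=C_{U,f}\hbar^{-1}$. The $\hbar^{-1}$ appears because the rescaled bump $f_j$ has $\|f_j\|_{\mathrm{Lip}(V)}+\|f_j\|_{L_\infty(V)}\leq C_f 2^j\hbar^{-1}$, which is the factor that Proposition \ref{disintegration_is_wasserstein_lipschitz} multiplies by $|y-y_0|$; the smoothness radius $M_U\hbar$ arises because Proposition \ref{disintegration_is_wasserstein_lipschitz} only guarantees that the transport flow $\theta_t$ stays inside $B_V(0,2r)$ when $|y-y_1|<\delta\cdot r$, and here $r=2^{-j}\hbar$. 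Finally, when the mean-zero hypothesis $\int_V f(x,z,0)\,dz=0$ holds, Lemma \ref{kernelcancellation_verification} together with Remark \ref{mean_zero_remark} produces Condition \eqref{kernelcancellation} with $C_4(\Kb)=C_{U,f}\hbar$, the extra $\hbar$ coming from expanding $f(x,z,\hbar)=f(x,z,0)+\hbar g(x,z,\hbar)$ and integrating.

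There is no real obstacle beyond ensuring that the constants $C_{U,f}$ and $M_U$ are uniform in $x\in U$. This is already arranged: Assumption \ref{everything_is_connected} forces $\overline U$ to be compact and $\varepsilon_x=\varepsilon$ to be constant, so all the derivatives of $\Lambda_x$ that enter the constants $C_{\Lambda_x,R,K}$ of Proposition \ref{disintegration_is_wasserstein_lipschitz} admit a uniform bound in $x\in U$. With that uniformity in hand, the four cited statements combine verbatim to give the theorem.
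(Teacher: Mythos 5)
Your proposal is correct and matches what the paper intends: the theorem is explicitly introduced as a summary of the section's results, so the proof is just the bookkeeping you describe, citing Lemmas \ref{kernel_parts_satisfy_kernelsupport}, \ref{kernel_parts_satisfy_kernelmass}, \ref{kernelcancellation_verification} and Corollary \ref{all_the_conditions_hold}, with uniformity in $x$ coming from Assumption \ref{everything_is_connected}. The explanations you give for the $\hbar$-dependence of each constant are accurate and align with the proofs of those supporting results.
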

The important point is that although $C_3(\Kb)\to\infty$ and $M(\Kb)\to 0$ as $\hbar\to 0,$ the product $C_1(\Kb)C_3(\Kb)$ and the ratio $C_1(\Kb)/M(\Kb)$ are bounded, and hence the upper bounds in the theorems of Section \ref{abstract_operator_section} are uniformly bounded as $\hbar\to 0.$ This will be crucial in the proof of the uniform continuity statement of Theorem \ref{main_theorem}.

\section{The Androulidakis-Mohsen-Yuncken calculus}\label{AMY_operator_section}
In this section we recall the definition of the Androulidakis-Mohsen-Yuncken calculus, and prove Theorem \ref{main_theorem}.
Let $(U,V,\sharp,\Ubb)$ be as in the previous section. That is, $U\subset \Rl^d$ is open, $V$ is an $N$-dimensional vector space equipped with dilations $\{\delta_t\}_{t>0},$ $\Ubb$ is an open subset of $U\times V\times \Rl$ containing $U\times \{0\}\times \{0\}$ and invariant under the action
\[
    \alpha_{\lambda}(x,z,\hbar) = (x,\delta_{\lambda}z,\lambda^{-1}\hbar),\quad \lambda>0,\; (x,z,\hbar)\in \Ubb
\]
and $\sharp$ is a linear map from $V$ to vector fields on $U.$ 

As previously, $r:\Ubb\to U\times \Rl$ is defined as $r(x,z,\hbar) = (x,\hbar).$ Note that $r$ is surjective.

We consider distributions on $\Ubb$ which are $r$-fibred and $r$-proper. The set of such distributions is written $\Ec'_r(\Ubb).$ Equivalently, $k\in \Ec'_r(\Ubb)$ is a linear map
\[
    k:C^\infty(\Ubb)\to C^\infty(U\times \Rl)
\]
which is $C^\infty(U\times \Rl)$-linear, where $C^\infty(\Ubb)$ is a $C^\infty(U\times \Rl)$-module in the obvious way.

We will consider distributions as generalised functions rather than generalised densities. That is, if $k$ is a smooth function on $\Ubb,$ then $k$ determines an $r$-fibred distribution by
\[
    (k,\phi)(x,\hbar) = \int_{V} k(x,z,\hbar)\phi(x,z,\hbar)dz,\quad \phi \in C^\infty(\Ubb),\; (x,\hbar) \in U\times \Rl
\]
where $dz$ is the Lebesgue measure on $V.$ This differs from the convention in \cite{vanErpYuncken2019}, where the measure $dz$ was incorporated into the distribution.

Given an $r$-fibred distribution $k\in \Ec'_r(\Ubb),$ the pushforward $(\alpha_{\lambda})_*k$ is defined on functions $k$ by $k\circ \alpha_{\lambda}^{-1},$ and on $r$-fibred distributions $k\in \Ec'_r(\Ubb)$ by
\[
    ((\alpha_{\lambda})_*k,\phi)(x,\hbar) = \lambda^{Q}(k,\phi\circ \alpha_{\lambda})(x,\lambda\hbar),\quad \phi\in C^\infty(\Ubb)\; (x,\hbar) \in r(\Ubb).
\]
\begin{remark}
    If $\Ubb = U\times V\times \Rl,$ then $\Ec'_{r}(\Ubb)$ can be described slightly more simply. We have a continuous identification
    \[
        \Ec'_{r}(U\times V\times \Rl) = C^\infty(U\times \Rl,\Ec'(V)).
    \]
    That is, $k\in \Ec'_r(U\times V\times \Rl)$ if $(x,z,\hbar)\to k(x,z,\hbar)$ is smooth in $(x,\hbar) \in U\times \Rl,$ and compactly supported in $z \in V.$
\end{remark}

An $r$-fibred distribution $k\in \Ec'_r(\Ubb)$ defines a family of linear operators $\{\Pb_\hbar\}_{\hbar>0}$ from $C^\infty_c(U)$ to $C^\infty(U)$ by
\[
    \Pb_{\hbar}u(x) = \int_{V} k(x,z,\hbar)u(\exp(\sharp \delta_{\hbar}z)x)\,dz = \hbar^{-Q}\int_V k(x,\delta_{\hbar}^{-1}z,\hbar)u(\exp(\sharp z)x)\,dz.
\]
This was denoted $(\mathrm{ev}_{\hbar})_*k$ in \cite{AMY2022arxiv}. Note that \emph{a priori} this formula may not make sense for all $x\in U$ and $u \in C^\infty_c(U),$ because the exponential $\exp(\sharp \delta_{\hbar}z)x$ may not be defined. For this reason we assume that $0<\hbar<1$ and restrict attention to a set $U$ satisfying Assumption \ref{everything_is_connected}. 

By the co-area formula, we have
\begin{equation}\label{operator_associated_to_kernel}
    \Pb_{\hbar}u(x) = \int_U \int_{\Lambda_x^{-1}(y)} k(x,z,\hbar)|D\Lambda_x(z)|^{-1}\,d\Hc^{N-n}(z) u(y)\,dy.
\end{equation}
That is, with respect to the Lebesgue measure on $U,$ $\Pb_{\hbar}$ has integral kernel
\[
    K(x,y) = \hbar^{-Q}\int_{\Lambda_x^{-1}(y)} k(x,\delta_{\hbar}^{-1}z,\hbar)\,d\mu^{x,y}(z),\quad x\neq y
\]
where $\mu^{x,y}$ is the same as in Definition \ref{disintegrated_measure_definition}.
Note that if $z\mapsto k(x,z,\hbar)$ is smooth away from $z=0,$ formula makes sense for every $x\neq y.$

The following is an adaptation of \cite[Definition 3.3]{AMY2022arxiv}.
\begin{definition}
    Let $(U,V,\sharp,\Ubb)$ be as above. For $m\in \Cplx,$ the set $\Ec'^{m}(\Ubb)$ is the subspace of $r$-fibred distributions $k\in \Ec'_r(\Ubb)$ such that
    for all $\lambda>0$ we have
    \[
        \lambda^{-m-Q}(\alpha_{\lambda})_*k-k \in C^\infty(\Ubb).
    \]
\end{definition}
This is an ``adaptation" of the definition in \cite{AMY2022arxiv} rather than a direct translation because the operators there were considered to act on half-densities, whereas here we consider operators acting on functions. This causes a shift by $Q$ in the exponent of $\lambda,$ but this does not change the class of operators.

\begin{definition}\label{AMY_operators_definition}
    For $m\in \Cplx,$ the set $\Psi^m_{\Fc}(U)$ of pseudodifferential operators associated to the filtration defined by $\{X_1,\ldots,X_N\}$
    with weights $\{w_1,\ldots,w_N\}$ is the set of operators of the form $\Pb_1,$ where
    \[
        \Pb_{\hbar}u(x) = \int_U \int_{\Lambda_x^{-1}(y)} k(x,z,\hbar)|D\Lambda_x(z)|^{-1}\,d\Hc^{N-n}(z) u(y)\,dy,\quad, u\in C^\infty_c(U), \hbar>0
    \]
    and $k\in \Ec'^{m}(\Ubb).$
    
    Given $T \in \Psi^m_{\Fc}(U),$ an associated family $\{T_{\hbar}\}_{\hbar>0}$ is any choice $\{\Pb_{\hbar}\}_{\hbar>0}$ defined by \eqref{operator_associated_to_kernel} for which $k\in \Ec'^{m}(\Ubb)$ and $T=\Pb_1.$    
\end{definition}

For $1\leq p < \infty,$ $L_{p}(U)$ denotes the $L_p$-space with respect to the Lebesgue measure on $U,$ that is
\[
    \|u\|_{L_p(U)} :=\left(\int_U |u(x)|^p\,dx\right)^{\frac1p}.
\]
As usual this should be replaced with the space of essentially bounded functions on $U$ when $p=\infty.$

\begin{definition}
    For $1\leq p\leq \infty$, we say that an operator $T$ is \emph{locally bounded on }$L_p(U)$ if for all smooth compactly supported functions $\phi,\psi\in C^\infty_c(U),$ there is a constant $C_{T,\phi,\psi}$ such that
    \[
        \|\phi T(\psi u)\|_{L_p(U)} \leq C_{T,\phi,\psi}\|u\|_{L_p(U)},\quad u \in C^\infty_c(U).
    \]
    Similarly, say that $T$ is locally compact if $M_{\phi}TM_{\psi}$ is compact on $L_p(U)$ for all $\phi,\psi\in C^\infty_c(U).$
    We say that a family of operators $\Tb = \{T_{\hbar}\}_{\hbar>0}$ is uniformly continuous as $\hbar\to 0$
    if the constant $C_{T_{\hbar},\phi,\psi}$ can be chosen bounded above as $\hbar\to 0.$ That is, if there exists a constant $C_{\Tb,\phi,\psi}$ such that
    \[
        \limsup_{\hbar\to 0} \|\phi T_{\hbar}(\psi u)\|_{L_p(U)} \leq C_{\Tb,\phi,\psi}\|u\|_{L_p(U)},\quad u \in C^\infty_c(U).
    \]
\end{definition}

The following theorem is a restatement of Theorem \ref{main_theorem}, with the notation described above.
\begin{theorem}\label{AMY_zero_order_bounded}
    Let $k \in \Ec'^{m}(\Ubb)$ where $\Re(m)=0,$ and let $\Pb_{\hbar}$ be the corresponding operator for $\hbar<1,$ defined as in \eqref{operator_associated_to_kernel}. Then $\Pb_{\hbar}$ is locally bounded on $L_p(U)$ for all $1<p<\infty,$ and the family $\{\Pb_{\hbar}\}_{0<\hbar<1}$ is uniformly continuous as $\hbar\to 0.$
\end{theorem}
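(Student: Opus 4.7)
The strategy is to reduce Theorem \ref{AMY_zero_order_bounded} to Corollary \ref{abstract_L_p_boundedness} via a dyadic decomposition of $k$ along the $\Rl^\times$-action $\alpha$, so that each dyadic piece fits the framework of Section \ref{AMY_section}. Fix a cutoff $\chi_0 \in C^\infty_c(V)$ with $\chi_0 \equiv 1$ near $0$ and supported in $\{|z|_V < 1\}$, and set $\phi(z) := \chi_0(z) - \chi_0(\delta_2 z)$, which is supported in a fixed dyadic annulus. Then $\chi_0(z) = \sum_{j=0}^\infty \phi(\delta_{2^j} z)$ pointwise on $V \setminus \{0\}$. Away from $z=0$ the distribution $k$ is smooth, so $(1-\chi_0) k$ is smooth and $r$-properly supported, giving an operator with uniformly bounded smooth kernel handled by Lemma \ref{triangle_inequality_lemma}. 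The singular piece is to be written $\chi_0 \cdot k = \sum_{j=0}^\infty \phi(\delta_{2^j} \cdot) k$.

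To express each dyadic piece in the form required by Definition \ref{kernel_parts_definition}, apply the approximate homogeneity $\lambda^{-m-Q}(\alpha_\lambda)_* k - k \in C^\infty(\Ubb)$ with $\lambda = 2^{-j}$:
\[
    k(x, \delta_{2^{-j}} u, \hbar) = 2^{j(m+Q)} k(x, u, 2^{-j}\hbar) + R_j(x, u, \hbar), \quad R_j \in C^\infty(\Ubb).
\]
Set $f(x, u, \hbar'') := \phi(u) k(x, u, \hbar'')$, which is smooth on $\Ubb$ because $\phi$ is supported away from the singularity of $k$, and let $\Kb = \{K_j\}_{j\in\Ntrl}$ be the sequence built from $f$ as in Definition \ref{kernel_parts_definition}. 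A change of variables shows that, at parameter $\hbar$, the Schwartz kernel of the operator associated to $\phi(\delta_{2^j}\cdot)k$ equals $2^{jm} K_j(x, y) + \tilde K_j(x, y)$, where $\tilde K_j$ is the kernel arising from $r_j(x, u, \hbar'') := 2^{-jQ} \phi(u) R_j(x, \delta_{2^{-j}} u, 2^j \hbar'')$. The prefactor $2^{-jQ}$ makes $\sum_j \tilde K_j$ absolutely summable by Lemma \ref{triangle_inequality_lemma}.

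For the main sum $\sum_j 2^{jm} K_j = \Op(\Kb_\alpha)$ with $\alpha_j := 2^{jm}$, the sequence $\alpha$ is uniformly bounded since $\Re(m) = 0$. The summary theorem closing Section \ref{AMY_section}, applied both to $\Kb$ and (using Lemma \ref{symmetry_of_Lambda}) to $\Kb^*$, shows that $\Kb$ satisfies Conditions \eqref{kernelsupport}, \eqref{kernelmass}, and \eqref{kernelsmoothnessballvolume} with the relevant combinations $C_1(\Kb) C_3(\Kb)$ and $C_1(\Kb)/M(\Kb)$ uniformly bounded as $\hbar \to 0$; the trivial estimate $|\int K_j\,dy| \leq C_2(\Kb)$ provides Condition \eqref{kernelcancellation} with $C_4(\Kb) = O(1)$, which still keeps the product $C_2(\Kb) C_4(\Kb)$ entering Theorem \ref{almostorthogonality} bounded. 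Corollary \ref{abstract_L_p_boundedness} then yields the $L_p(U)$ bound with a constant uniform in $\hbar \in (0, 1)$, proving both assertions of the theorem.

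The hard part, attributed in the acknowledgements to Omar Mohsen, is the observation that this dyadic decomposition is almost orthogonal; that has already been reduced to the kernel estimates of Section \ref{abstract_section} in Theorem \ref{almostorthogonality}, so the bulk of the work is done. What remains genuinely delicate is the bookkeeping for the decomposition itself: each term $\phi(\delta_{2^j} \cdot) k$ vanishes at $z = 0$ while $k$ has full singular support there, so one must verify using approximate homogeneity that $\sum_{j=0}^{\infty} \phi(\delta_{2^j} \cdot) k$ reproduces $\chi_0 \cdot k$ in $\Ec'_r(\Ubb)$---equivalently, that any ``atom at $0$'' is absorbed by the smooth remainders $R_j$. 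This is the step that genuinely uses $\Re(m) = 0$: it is the borderline case where the dyadic pieces have comparable $L_p$ operator norms and no better-than-telescoping decay is available, so honest almost-orthogonal cancellation is required.
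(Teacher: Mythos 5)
Your overall strategy (dyadic decomposition along the $\Rl^\times$-action, reduce to the abstract results of Sections \ref{abstract_section}--\ref{abstract_operator_section}) is the right one, but two of the steps as you describe them do not work, and one of them is where the actual content of the theorem lives.

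\textbf{The cancellation condition is not free.} You write that the trivial estimate $|\int_U K_j(x,y)\,dy|\leq C_2(\Kb)$ ``provides Condition \eqref{kernelcancellation} with $C_4(\Kb)=O(1)$''. But Condition \eqref{kernelcancellation} reads
\[
\Bigl|\int_X K_j(x,y)\,d\mu(y)\Bigr|\leq C_4(\Kb)\,2^{-j},
\]
with the decay $2^{-j}$ built in. A uniform bound $|\int K_j\,dy|\leq C_2$ would force $C_4(\Kb)\gtrsim 2^j\sup_j|\int K_j\,dy|$, which is not a constant; the condition then simply fails. This matters: Condition \eqref{kernelcancellation} is what drives the almost-orthogonality estimate in Theorem \ref{almostorthogonality}, and without it the Cotlar--Knapp--Stein step in Theorem \ref{abstract_L_2_boundedness} has no input. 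Moreover, for your choice $f(x,u,\hbar)=\phi(u)k(x,u,\hbar)$, the mean $\int_V f(x,z,0)\,dz$ is generically nonzero, so Lemma \ref{kernelcancellation_verification} does not apply. The paper gets cancellation for free only when $m=0$, because its cocycle $f=2^{-Q-m}(\alpha_2)_*k-k$ then has $\int_V f(x,z,0)\,dz=0$ automatically; for $\Re(m)=0$, $m\neq 0$, this argument does not give cancellation either, and the paper instead proves $L_2$ boundedness by passing to $\Pb_\hbar^*\Pb_\hbar\in\Psi^0_{\Fc}$ and then deduces weak-type $(1,1)$ from Corollary \ref{weak_type_corollary}, which only needs Conditions \eqref{kernelsupport}--\eqref{kernelsmoothnessballvolume} plus $L_2$ boundedness. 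You need some version of this reduction; it cannot be replaced by a trivial bound.

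\textbf{The remainder terms are not controlled.} You claim the factor $2^{-jQ}$ in $r_j$ makes $\sum_j\tilde K_j$ absolutely summable. But the approximate homogeneity hypothesis only says that $2^{-m-Q}(\alpha_2)_*k-k\in C^\infty(\Ubb)$ for a \emph{fixed} scale, and iterating it shows that your $R_j$ is a sum of $j$ telescoping increments, $R_j(x,u,\hbar)=\sum_{i=1}^{j}2^{i(m+Q)}f_{\mathrm{paper}}(x,\delta_{2^{i-j}}u,2^{-i}\hbar)$, which is of size $O(2^{jQ})$, not $O(1)$. Your prefactor $2^{-jQ}$ merely normalises $r_j$ back to $O(1)$; it does not make the sum over $j$ converge, so Lemma \ref{triangle_inequality_lemma} gives no useful bound on $\sum_j\tilde K_j$. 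The paper avoids this entirely by building a single fixed cocycle $f$ and using the exact telescoping identity
\[
k(x,z,\hbar)-2^{(m+Q)n}k(x,\delta_{2^n}z,2^{-n}\hbar)=\sum_{j=1}^n 2^{jm}2^{jQ}f(x,\delta_{2^j}z,2^{-j}\hbar),
\]
so there is no remainder to sum at all; the $n\to\infty$ limit contributes only a multiplication operator supported at $z=0$. If you want to keep your Littlewood--Paley cutoff $\chi_0$, you must account for the growth of $R_j$ more carefully (and the atom at $z=0$), rather than asserting summability.

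=== END ===
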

\begin{proof}
    Since we only seek local boundedness, we can assume that $U$ is small enough so that Assumption \ref{everything_is_connected} holds. 
        
    Let $f(x,z,\hbar) = 2^{-Q-m}k(x,\delta_2^{-1}z,2\hbar)-k(x,z,\hbar).$ By the definition of $\Ec'^{m}(\Ubb),$ $f \in C^\infty(\Ubb).$ Without loss of generality, we assume that $f(x,z,\hbar)$ is supported in the set $|z|_V < 1$ for $-1<\hbar<1.$

    For every $n\geq 1,$ we have
    \[
        k(x,z,\hbar)-2^{(m+Q)n}k(x,\delta_{2^n}z,2^{-n}\hbar) = \sum_{j=1}^{n} 2^{jm}\cdot 2^{jQ}f(x,\delta_{2^{j}}z,2^{-j}\hbar).
    \]
    Since $k$ is compactly supported in the $z$-variable, it follows that there is a distribution $h(x,z),$ smooth in $x$ and supported at $z=0$ such that
    \[
        k(x,z,\hbar) = h(x,z)+ \sum_{j=1}^\infty 2^{jm}\cdot 2^{jQ}f(x,\delta_{2^j}z,2^{-j}\hbar).
    \]
    Since $h$ has homogeneity $-Q$ with respect to dilations in the $z$-variable, we have $h(x,z) = h(x)\delta_{0}(z)$ for some $h\in C^\infty(U).$ Therefore, the integral kernel $K$ of $\Pb_{\hbar}$ satisfies
    \[
        K(x,y) = \sum_{j=1}^\infty \alpha_jK_j(x,y),\quad x\neq y
    \]
    where $K_j$ are defined as in Definition \ref{kernel_parts_definition}, $\alpha_j =2^{jm}$ and the sum converges pointwise. By Corollary \ref{all_the_conditions_hold}, the sequence $\Kb = \{K_j\}_{j\geq 1}$ satisfies Conditions \eqref{kernelsupport}, \eqref{kernelmass} and \eqref{kernelsmoothnessballvolume} of Definition \ref{kernelconditions}, with constants
    \[
        C_1(\Kb) = C_{U,f}\hbar,\, C_2(\Kb) = C_{U,f},\, C_3(\hbar) = C_{U,f}\hbar^{-1},\, \, M(\Kb) = C_{U,f}\hbar.
    \]
    
    To verify the same for $\Kb^*,$ we need to use the fact that the pseudodifferential calculus is closed under adjoints. This is proved in \cite[Proposition 3.13]{AMY2022arxiv}, and moreover the adjoint $\Pb_{\hbar}^*$ is related to the $r$-fibred distribution
    \[
        \widetilde{k}(x,z,\hbar) := \overline{k(\Lambda_x(\delta_{\hbar}z),-z,\hbar)}.
    \]
    The corresponding cocycle is given by
    \[
        \widetilde{f}(x,z,\hbar) = 2^{-Q}\widetilde{k}(x,\delta_2^{-1}z,2\hbar)-\widetilde{k}(x,z,\hbar).
    \]
    Since $\widetilde{k} \in \Ec'^{-m}(\Ubb),$ the function $\widetilde{f}$ has all the same properties just proved for $f.$ We have
    \[
        \overline{K_j(y,x)} = \int_{V} 2^{jQ}\hbar^{-Q}\widetilde{f}(x,-\delta_{2^j}\delta_{\hbar}^{-1}z,2^{-j}\hbar)\,d\mu^{x,y}(z).
    \]
    Here we are using Lemma \ref{symmetry_of_Lambda} to replace $d\mu^{y,x}(z)$ by $d\mu^{x,y}(-z).$ Therefore, $\Kb^*$ also satisfies Conditions \eqref{kernelsupport}, \eqref{kernelmass} and \eqref{kernelsmoothnessballvolume} of Definition \ref{kernelconditions}, with the same constants as $\Kb.$

    Assume now that $m=0.$ In this case, we have
    \[
        \int_{V} f(x,z,0)\,dz = \int_{V} 2^{-Q}k(x,\delta_2^{-1}z,0)-k(x,z,0)\,dz = 0.
    \]
    So for $m=0,$ Corollary \ref{all_the_conditions_hold} implies that $\Kb$ satisfies all of the conditions of Definition \ref{kernelconditions}. Hence in this case, Corollary \ref{abstract_L_p_boundedness}
    and an easy limit argument
    proves that
    \[
        \limsup_{\hbar\to 0} \| \Pb_{\hbar}\|_{L_2(U)\to L_2(U)} < \infty.
    \]    
    In general, for $\Re(m)=0$ we can reduce to the case $m=0$ by replacing $\Pb_{\hbar}$ by $\Pb_{\hbar}^*\Pb_{\hbar},$ using the fact that $\Psi^{-m}_{\Fc}\cdot \Psi^{m}_{\Fc}\subset \Psi^0_{\Fc}$ \cite[Proposition 3.14]{AMY2022arxiv}. Thus in the general case we still have
    \[
        \limsup_{\hbar\to 0} \| \Pb_{\hbar}\|_{L_2(U)\to L_2(U)} < \infty.
    \]
    Since $C_1(\Kb)C_3(\Kb)$ and $C_1(\Kb)M(\Kb)^{-1},$ $C_2(\Kb)$ are uniformly bounded as $\hbar\to 0,$ it follows from Corollary \ref{weak_type_corollary} that $\Pb_{\hbar}$ has weak-type $(1,1),$ uniformly as $\hbar\to 0.$ 
    
    Hence, for general $m$ with $\Re(m)=0$ and $1<p<\infty$ we have
    \[
        \limsup_{\hbar\to 0} \| \Pb_{\hbar}\|_{L_p(U)\to L_p(U)} < \infty.
    \]
\end{proof}

Finally, we can conclude the $\Re(m)=0$ part of Theorem \ref{main_theorem}
\begin{theorem}\label{main_theorem_bddness_part}
    Let $1<p<\infty,$ and let $T \in \Psi^m_{\Fc}(X).$ If $\Re(m)\leq 0,$ then $T$ is locally bounded on $L_p(X).$

    Moreover, the associated family $\{T_{\hbar}\}_{\hbar>0}$ is uniformly continuous as $\hbar\to 0$ in the sense that if $\phi,\psi\in C^\infty_c(X),$ then
    \[
        \limsup_{\hbar\to 0} \|M_{\phi}T_{\hbar}M_{\psi}\|_{L_p(X)\to L_p(X)} < \infty.
    \]
\end{theorem}
\begin{proof}
    Theorem \ref{AMY_zero_order_bounded} is exactly this result stated in a coordinate chart $U.$ Since the result is local, we immediately obtain the theorem.
\end{proof}

\section{Compactness of negative order operators}
The boundedness and compactness of negative order operators is much easier than zero order operators, since in this case we can represent the operator as a norm convergent series.
\begin{theorem}\label{AMY_negative_order_bounded}
    Let $1\leq p\leq \infty,$ let $k\in \Ec'^{m}(\Ubb)$ for $\Re(m)<0,$ and let $\Pb_{\hbar}$ be the corresponding operator as in \eqref{operator_associated_to_kernel}. Then $\Pb_{\hbar}$ is compact on $L_p(U)$ and the family $\{\Pb_{\hbar}\}_{\hbar>0}$ is uniformly continuous as $\hbar\to 0.$
\end{theorem}
\begin{proof}
    Since the result is local, we may assume that $U$ satisfies Assumption \ref{everything_is_connected}. Since $p=1$ and $p=\infty$ are not included in Theorem \ref{AMY_zero_order_bounded}, we first show the uniform boundedness of $\Pb_{\hbar}$ on $L_p(U)$ as $h\to 0.$
    Let
    \[
        f(x,z,\hbar) = 2^{-m-Q}k(x,\delta_{2}^{-1}z,2\hbar)-k(x,z,\hbar),\quad (x,z,\hbar)\in \Ubb.
    \]
    By definition $f \in C^\infty(\Ubb),$ and just as in Thoerem \ref{AMY_zero_order_bounded} for every $n\geq 1$ we have
    \[
        k(x,z,\hbar)-2^{(m+Q)n}k(x,\delta_{2^n}z,2^{-n}\hbar) = \sum_{j=1}^{n} 2^{j(m+Q)}f(x,\delta_{2^{j}}z,2^{-j}\hbar).
    \]   
    Since $k$ is compactly supported in the $z$-variable and has negative order, it follows that
    \[
        k(x,z,\hbar) = \sum_{j=1}^\infty 2^{jm} \cdot 2^{jQ}f(x,\delta_{2^j}z,2^{-j}\hbar).
    \]
    Therefore
    \[
        \Pb_{\hbar} = \sum_{j=1}^\infty 2^{jm}\Op(K_j)
    \]
    where $K_j$ is related to $f$ precisely as in Definition \ref{kernel_parts_definition}. By Lemma \ref{kernel_parts_satisfy_kernelmass}, the sequence $\Kb = \{K_j\}_{j=1}^\infty$ satisfies Condition \ref{kernelmass}. The adjoint $\Kb^*$ is an operator of the same form, and hence $\Kb$ satisfies the requirements of Lemma \ref{triangle_inequality_lemma}.
    Hence, by Lemma \ref{triangle_inequality_lemma}, we get
    \[
        \|\Pb_{\hbar}\|_{L_p(U)\to L_p(U)} \leq C_{U,f}\sum_{j=1}^\infty 2^{j\Re(m)}= C_{U,f} \sum_{j=1}^\infty 2^{j\Re(m)}.
    \]
    Since $\Re(m)<0,$ we conclude the uniform boundedness of $\Pb_{\hbar}$ on $L_p(U)$ for $1\leq p\leq \infty$ as $\hbar\to 0.$

    Observe that the integral kernel of $\Op(K_j)$ is smooth and compactly supported. In particular, $\Op(K_j)$ is compact on $L_p(U).$ Since $\Pb_{\hbar}$ is a norm convergent sum of compact operators, it follows that $\Pb_{\hbar}$ itself is compact.
\end{proof}

We conclude with the $\Re(m)<0$ part of Theorem \ref{main_theorem}.
\begin{theorem}
    Let $1\leq p\leq \infty,$ and let $T\in \Psi^m_{\Fc}(X)$ where $\Re(m)<0.$ Then $T$ is locally bounded on $L_p(X)$ and the associated family $\{T_{\hbar}\}_{\hbar>0}$ is uniformly continuous as $\hbar\to 0$ on $L_p(X).$ Moreover $T$ is locally compact.
\end{theorem}
\begin{proof}
    Since the assertion is local, it suffices to work in a single coordinate chart, and in this case the result is Theorem \ref{AMY_negative_order_bounded}.
\end{proof}

\section{Sobolev spaces}\label{sobolev_section}
In this section we obtain results about the (local) boundedness of negative order operators between $L_p$ spaces, essentially we derive Sobolev embedding theorems for the AMY calculus.

\begin{definition}\label{sobolev_spaces}
    Let $(X,\Fc)$ be a filtered manifold, $s \in \mathbb{R}$ and $1\leq p\leq\infty.$ The local Sobolev space $W^{s}_{p,\loc}(X;\Fc)$ is the set of all distributions $u\in \Dc'(X)$ such that $Pu\in L_{p,\loc}(X)$ for any $P \in \Psi^s_{\Fc}(X).$
\end{definition}

\begin{example}\label{hormander_example}
    Let $X_1,\ldots,X_n$ be vector fields on $U\subseteq \Rl^d$ which satisfy the rank $r$ H\"ormander property; that is, the tangent space of $U$ is everywhere spanned by the iterated brackets
    \[
    [X_{i_1},[X_{i_2},\cdots,[X_{i_{k}},X_{i_{k+1}}]\cdots]
    \]
    of depth at most $r.$ Let $\Fc$ be the filtration determined by $\Fc_1 = \mathrm{span}_{C^\infty(U)}(\{X_1,\ldots,X_n\}),$ $\Fc_2 = \Fc_1+[\Fc_1,\Fc_1],$ etc.

    For $1<p<\infty,$ have that $u\in W^{1}_{p,\loc}(U)$ if and only if $X_1u,\ldots,X_nu\in L_{p,\loc}(U).$ To see this, let
    \[
        P = 1+\sum_{j=1}^n X_j^*X_j.
    \]
    By \cite[Theorem 3.30]{AMY2022arxiv}, $P$ has a parametrix $S \in \Psi^{-2}_{\Fc}(U),$ i.e. $SP-1$ is infinitely smoothing. If $X_1u,\ldots,X_nu\in L_{p,\loc}(U),$ then
    \[
        u = Su+\sum_{j=1}^n SX_j^*(X_ju) + (1-SP)u.
    \]
    For any $R \in \Psi^1_{\Fc}(U),$ we have $RSX_j\in \Psi^0_{\Fc}(U),$ and hence by Theorem \ref{main_theorem}, $Ru\in L_{p,\loc}(U).$ So by definition we conclude that $u\in W^{1}_{p,\loc}(U;\Fc).$
\end{example}

As in the previous sections, $U\subset \Rl^d$ is an open set and $X_1,\ldots,X_N$ are vector fields on $U$ equipped with weights $w_1,\ldots,w_N.$ We have defined
\[
    Q := \sum_{n=1}^N w_n.
\]
We assume as before that $U$ satisfies Assumption \ref{everything_is_connected}.
\begin{lemma}\label{weak_kernel_bound}
    Let $k\in \Ec'^{m}(\Ubb).$ Let $j\geq 0,$ and let $K_j$ be related to $k$ as in Definition \ref{kernel_parts_definition}. There exists a constant $C$ such that
    \[
        |K_j(x,y)|\leq C\hbar^{-Q}2^{jQ},\quad x,y\in U
    \]
    uniformly in $0<\hbar<1.$
\end{lemma}
\begin{proof}
    The formula for $K_j$ is
    \[
        K_j(x,y) = 2^{jQ}\hbar^{-Q}\int_{\Lambda_x^{-1}(y)} f(x,\delta_{\hbar}^{-1}\delta_{2^j}z,2^{-j}\hbar)\,d\mu^{x,y}(z),\quad x,y\in U.
    \]
    The family of functions $\{ (x,\delta_{\hbar}^{-1}\delta_{2^j}z,2^{-j}\hbar)\}_{j\geq 0,0<\hbar<1}$ has uniformly bounded $L_{\infty}(\Ubb)$-norm, hence over any compact set of $(x,y)$ there is $C$ such that
    \[
        |K_j(x,y)|\leq C\hbar^{-Q}2^{jQ}.
    \]
\end{proof}

\begin{corollary}\label{interpolation_bound}
    For $1\leq p\leq q\leq  \infty,$ we have
    \[
        \|\Op(K_j)\|_{L_p(U)\to L_{q}(U)} \leq C\hbar^{-Q(\frac1p-\frac1q)}2^{jQ(\frac1p-\frac1q)}.
    \]
\end{corollary}
\begin{proof}
    The norm of $\Op(K_j)$ from $L_1$ to $L_{\infty}$ is simply $\sup_{x,y} |K_j(x,y)|,$ so for $p=1$ and $q=\infty$ we obtain the result from Lemma \ref{weak_kernel_bound}. By Lemma \ref{kernel_parts_satisfy_kernelmass}, we also have
    \[
        \sup_{x,j} \int_{U} |K_j(x,y)|d\mu(y) < \infty,\quad \sup_{x,j} \int_{U}|K_j(x,y)|\,d\mu(x) < \infty
    \]
    and hence $\Op(K_j)$ is bounded from $L_{\infty}(U)$ to $L_{\infty}(U)$ and $L_1(U)$ to $L_1(U),$ uniformly in $j.$ The result follows from interpolation.
\end{proof}

If $\Fc$ is any filtration of vector fields on a manifold $X,$ then near any point we can choose a weighted family of vector fields defining the filtration and hence a corresponding value of $Q.$
This integer $Q$ is not really natural since a different choice of weighted vector fields could define the same filtration with a different value of $Q.$ However, for a given choice of $Q$ we can derive a Sobolev embedding theorem for the AMY calculus.

For this section we will assume that $(X,\Fc)$ is a filtered manifold such that there exists a maximal $Q$ so that near any point of $x$ there exists a weighted family of vector fields generating $\Fc$ with sum of weights at most $Q.$

\begin{lemma}\label{supercritical_Sobolev_embedding_lemma}
    Let $T \in \Psi^m_{\Fc}(X)$ and $1\leq p\leq \infty$ where $\Re(m)<-\frac{Q}{p}.$ Then $T$ maps locally compactly $L_p(X)$ into $C(X).$
\end{lemma}
\begin{proof}
    Since the assertion is local, we may again work in a chart $U.$ Write $T=\Op(K),$ where $K=\sum_{j=0}^\infty 2^{jm}K_j$ as in Theorem \ref{AMY_negative_order_bounded}. By Corollary \ref{interpolation_bound}, we have
    \[
        \|Tu\|_{\infty} \leq \sum_{j=0}^\infty 2^{\Re(m)j}\|\Op(K_j)u\|_{\infty} \lesssim \|u\|_p
    \]
    provided that $\Re(m)<-\frac{Q}{p}.$ Since each summand $K_j$ is smooth and compactly supported, $\Op(K_j)$ is locally compact from $L_p(U)$ to $C(U)$ and the above computation shows that $T,$ truncated to any compact subset of $U,$ is a convergent sum of compact operators.
\end{proof}

The following lemma represents a Hardy-Littlewood-Sobolev inequality for the filtration $\Fc.$ The scheme of the proof is essentially the same as classic interpolation-based proofs of Hardy-Littlewood-Sobolev, as in e.g. \cite[Theorem 1, Page 119]{Stein-1970}.
\begin{lemma}\label{subcritical_sobolev_embedding_lemma}
    Let $T \in \Psi^m_{\Fc}(X),$ where $\Re(m)<0.$ For all $1< p < -\frac{Q}{\Re(m)},$ $T$ is locally bounded from $L_p(X)$ into $L_q(X),$ where
    \[
        \frac{1}{q}=\frac{1}{p}+\frac{\Re(m)}{Q}.
    \]
\end{lemma}
\begin{proof}
    It suffices to work in a single coordinate chart $U.$ We show that $T$ is bounded from $L_p(U)$ to $L_{q,\infty}(U);$ the result will then follow by interpolation. Since we work locally, take
    \[
        K = \sum_{j\geq 0} 2^{mj}K_j.
    \]
    Let $t>0.$ We estimate
    \[
        \mu(\{x \in U\;:\; |\Op(K)u(x)| > t\}.
    \]
    Note that if $t\leq 2C\|u\|_p,$ then Markov's inequality delivers
    \[
        \mu(\{x\in U\;:\;|\Op(K)u(x)|>t\}) \lesssim Ct^{-p}\|u\|_p^p \lesssim t^{-q}\|u\|_p^q.
    \]
    So we concentrate on the case that $t>2C\|u\|_p.$ Choose $N$ such that
    \[
        \sum_{j=0}^{N-1} C2^{j(\Re(m)+Q/p)}\|u\|_p < \frac{t}{2}.
    \]
    Let
    \[
        K=K_++K_-,\quad K_- := \sum_{j=0}^{N-1} 2^{mj}K_j,\quad K_+ = \sum_{j=N}^\infty 2^{mj}K_j.
    \]
    From the choice of $N$ and Corollary \ref{interpolation_bound},
    \[
        \|K_-u\|_{\infty} \leq \frac{t}{2}.
    \]
    Therefore
    \[
        \mu(\{x \in U\;:\; |\Op(K)u(x)| > t\} \leq  \mu(\{x \in U\;:\; |\Op(K_+)u(x)| > t/2\}.
    \]
    By Markov's inequality, it follows that
    \begin{align*}
        \mu(\{x \in U\;:\; |\Op(K_+)u(x)| > t/2\} &\lesssim t^{-p}\left(\sum_{j=N}^{\infty} 2^{\Re(m)j}\|\Op(K_j)u\|_p\right)^p\\
                                                  &\lesssim t^{-p}2^{N\Re(m)p}\|u\|_p^p.
    \end{align*}
    By the choice of $N,$ we have
    \[
        t^{-p}2^{N\Re(m)}\|u\|_p^p \lesssim t^{-q}\|u\|_p^q.
    \]
    Hence
    \[
        \mu(\{x\in U\;:\; |\Op(K)u(x)|>t\}) \lesssim t^{-q}\|u\|_p^q,\quad t>0.
    \]
    That is, $\Op(K)$ maps $L_p$ to $L_{q,\infty},$ and this completes the proof.
\end{proof}

By the definition of the Sobolev spaces $W^{s}_{p,\loc}(X;\Fc),$ Lemmas \ref{supercritical_Sobolev_embedding_lemma} and \ref{subcritical_sobolev_embedding_lemma} can be summarised as follows:
\begin{theorem}\label{sobolev_embedding_theorem}
    Let $(X,\Fc)$ be a filtered manifold with the property that there exists an integer $Q$ so that near any point of $X$ there exists a weighted family of vector fields generating $\Fc$ with sum of weights at most $Q.$

    For all $1\leq p\leq \infty,$ we have a locally compact embedding
    \[
        W^{s}_{p,\loc}(X;\Fc)\subset C(X),\quad s>\frac{Q}{p}
    \]
    while for $1<p<\infty$ and
    \[
        \frac{1}{q}=\frac{1}{p}-\frac{s}{Q}
    \]
    we have a continuous embedding
    \[
        W^{s}_{p,\loc}(X;\Fc)\subset L_{q,\loc}(X).
    \]
\end{theorem}
\begin{proof}
    It was proved in \cite[Proposition 3.32]{AMY2022arxiv} that for every $s$ there exists a pair of operators $P_s \in \Psi^s_{\Fc}(X)$ and $P_{-s}\in \Psi^{-s}_{\Fc}(X)$ such that $P_{-s}P_{s} -1 \in \Psi^{-1}_{\Fc}(X).$ By definition if $u\in W^{s}_{p,\loc}(X),$ then $P_su\in L_{p,\loc}(X)$ and therefore
    \[
        u \in P_{-s} L_{p,\loc}(X)+(1-P_{-s}P_s)L_{p,\loc}(X).
    \]
    That is, $u$ is in the image of an operator of order $-s.$
\end{proof}

\begin{example}
    Let $X_1,\ldots,X_n$ be vector fields on an open set $U\subset \Rl^d$ satisfying the H\"ormander condition of rank $r\geq 1,$ as in Example \ref{hormander_example}.

    Let $Q$ be the graded dimension of the free nilpotent Lie algebra $\mathcal{G}_{n,r}$ of depth $r$ with $n$ generators.
    Then for $p<Q$ and $u\in C^\infty_c(U)$ supported on a compact subset $K$ of $U,$ Theorem \ref{sobolev_embedding_theorem} and Example \ref{hormander_example} imply that
    \[
        \|u\|_{L_{\frac{p}{1-p/Q}}(U)} \leq C_K(\|u\|_{L_p(U)}+\|X_1u\|_{L_p(U)}+\cdots+\|X_{n}u\|_{L_p(U)}).
    \]
    On the other hand, if $p>Q,$ we have
    \[
        \|u\|_{L_\infty(U)} \leq C_K(\|u\|_{L_p(U)}+\|X_1u\|_{L_p(U)}+\cdots+\|X_{n}u\|_{L_p(U)}).
    \]
    In this example $Q$ represents a worst-case-scenario and is in many cases much larger than necessary.

\end{example}

\section{Applications to elliptic regularity in $L_p$}\label{elliptic_regularity_section}
Recall that the Sobolev space $W^{s}_{p,\loc}(X;\Fc)$ associated to a filtered manifold $(X,\Fc)$ was given in Definition \ref{sobolev_spaces}. The following is an immediate consequence of Theorem \ref{main_theorem} and the composition properties of the AMY calculus.
\begin{lemma}\label{sobolev_mapping}
    If $s \in \Rl$ and $T \in \Psi^m_{\Fc}(X).$ If $1<p<\infty,$ then $T$ is a continuous mapping
    \[
        T:W^{s+\Re(m)}_{p,\loc}(X;\Fc)\to W^{s}_{p,\loc}(X;\Fc).
    \]
\end{lemma}

One application of Lemma \ref{sobolev_mapping} in the unfiltered case is the \emph{a priori} regularity theory for elliptic pseudodifferential operators, which can be proved via the fact that an elliptic operator of order $m$ has an inverse of order $-m$ modulo smoothing operators (a parametrix).

It was not proved in \cite{AMY2022arxiv} that a maximally hypoelliptic differential operator has a parametrix in the calculus, instead it was only proved for operators obeying the so-called strong $*$-Rockland condition \cite[Definition 3.28]{AMY2022arxiv}. We do not need to recall the definition of that property here, we only record the following application of the existence of a parametrix.
\begin{theorem}\label{elliptic_regularity}
    Let $1<p<\infty,$ let $D\in \Psi^m_{\Fc}(X)$ satisfy the strong $*$-Rockland condition. We have the elliptic regularity
    \[
        u\in \Dc'(X)\text{ and }Du\in W^{s}_{p,\loc}(X;\Fc)\Rightarrow u\in W^{s+m}_{p,\loc}(X;\Fc).
    \]
\end{theorem}
\begin{proof}
    It is proved in \cite[Theorem 3.30]{AMY2022arxiv} that pseudodifferential operators having the strong $*$-Rockland property have a parametrix in $\Psi^{-m}_{\Fc}(X),$ i.e. that there exists $E\in \Psi^{-m}_{\Fc}(X)$ such that $ED-1$ is smoothing. Since
    \[
        u = EDu+(1-ED)u.
    \]
    and $Du\in L_{p,\loc}(X),$
    Lemma \ref{sobolev_mapping} ensures that $EDu \in W^{s+m}_{p,\loc}(X),$ and since $(1-ED)u\in C^\infty(X)$ we conclude the proof.
\end{proof}

Applications of elliptic regularity are well-known. As an example, the following is an existence and \emph{a priori} regularity result for a certain non-linear PDE.
\begin{theorem}
    Let $(X,\Fc)$ be a compact filtered manifold, equipped with a Riemannian metric $g,$ and let $D\in \mathrm{DO}^{m}_{\Fc}(X)$ be a symmetric operator with respect to the inner product on $C^\infty(X)$ induced by $g.$ Assume that $D^2$ satisfies the strong $*$-Rockland condition.

    Let $p>1.$ For all real-valued $f \in L_{\frac{p+1}{p}}(X),$ there exists $u\in W^{2m}_{\frac{p+1}{p}}(X;\Fc)$ such that
    \[
        D^2u+u|u|^{p-1} = f.
    \]
\end{theorem}
\begin{proof}
    Let $u\in W^{m}_2(X;\Fc)\cap L_{p+1}(X)$ be real-valued, and define
    \[
        E(u) := \frac12\|Du\|_{L_2(X,g)}^2+\frac{1}{p+1}\|u\|_{L_{p+1}(X,g)}^{p+1}-\int_{X} f(x)u(x)\, dg(x).
    \]
    This functional is convex, coercive and lower bounded on the real-valued subspace of $W^{m}_2(X)\cap L_{p+1}(X),$ and hence has an infimum \cite[Theorem 1.5.6]{BadialeSerra2011}. By variational calculus the infimum $u$ satisfies
    \[
        D^2u+u|u|^{p-1} = f
    \]
    in the weak sense. Since $u\in L_{p+1}(X),$ it follows that $D^2u\in L_{\frac{p+1}{p}}(X).$
    By Theorem \ref{elliptic_regularity}, we deduce that $u\in W^{2m}_{\frac{p+1}{p}}(X;\Fc).$
\end{proof}

\end{document}